%the first vesion submitted to JoC on August 26, 2022
% this version submitted to JoC on October 24, 2022
%Referees' reports: December 19, 2022
% this revision submitted to JoC on February 14, 2023
% accepted for publication in JoC on April 27, 2023
\documentclass[12pt]{article}  
\usepackage{amsmath}
\usepackage{amssymb}
\usepackage{theorem}
\usepackage{euscript}
\usepackage{pstricks}
\usepackage{authblk}
\usepackage{verbatim}
\usepackage{graphics}
\usepackage{graphicx}
%\usepackage{refcheck}
% -------------------------------------------------------------------------
\topmargin -1.2cm
\oddsidemargin -0.0cm
\textwidth  16cm 
\headheight 0.0cm
\textheight 23cm
\parindent  6mm
\parskip    6pt
\tolerance  1000

\usepackage{color}

\usepackage{hyperref}
\hypersetup
{
	colorlinks,
	citecolor=black,
	filecolor=black,
	linkcolor=blue,
	urlcolor=blue
}
\hypersetup{linktocpage}
% -------------------------------------------------------------------------
\newtheorem{theorem}{Theorem}[section]
\newtheorem{lemma}[theorem]{Lemma}

\numberwithin{equation}{section}

% -------------------------------------------------------------------------

\theoremstyle{definition}
 
\theoremstyle{remark}
\newtheorem{remark}[theorem]{Remark}

%%%%%%%%%%%%%%%%%%%%%%%%%%%%%%%%%%%%
\newcommand{\brac}[1]{\left(#1\right)}

\newcommand{\brab}[1]{\left\{#1\right\}}

%%%%%%%%%%%%%%%%%%%%%%%%%%%%%%%%%%%%

\newcommand{\bk}{{\boldsymbol{k}}}

\newcommand{\br}{{\boldsymbol{r}}}
\newcommand{\bs}{{\boldsymbol{s}}}
\newcommand{\bt}{{\boldsymbol{t}}}
\newcommand{\bx}{{\boldsymbol{x}}}

\newcommand{\bF}{{\boldsymbol{F}}}
\newcommand{\bX}{{\boldsymbol{X}}}
\newcommand{\by}{{\boldsymbol{y}}}

\newcommand{\bW}{{\boldsymbol{W}}}

\newcommand{\bone}{{\boldsymbol{1}}}

\newcommand{\bxi}{{\boldsymbol{\xi}}}
\newcommand{\rd}{{\rm d}} 
\newcommand{\Int}{{\rm Int}}

% --------------------------------------------------------------------------

\def\ZZd{{\mathbb Z}^d}
\def\IId{{\mathbb I}^d}

\def\RR{{\mathbb R}}
\def\RRd{{\mathbb R}^d}

\def\NN{{\mathbb N}}
\def\NNd{{\NN}^d}

\def\NN{{\mathbb N}}
\def\RR{{\mathbb R}}

\def\TT{{\mathbb T}}
\def\IId{{\mathbb I}^d}

\def\NNd{{\mathbb N}^d}
\def\RRd{{\mathbb R}^d}

\def\ZZd{{\mathbb Z}^d}

%\def\NNom{{\mathbb Z}^\infty_{+*}}

%-----------------------------------

\def\Hh{{\mathcal H}}

\def\Qq{{\mathcal Q}}

%-------------------------------

\def\NN{{\mathbb N}}
\def\RR{{\mathbb R}}

%----------------------------------
\def\TT{{\mathbb T}}
\def\IId{{\mathbb I}^d}

\def\NNd{{\mathbb N}^d}
\def\RRd{{\mathbb R}^d}

%--------------------------------------------------------------------------

\def\sign{\operatorname{sign}}
% --------------------------------------------------------------------------

\def\Wap{W^r_p}

%%%%%%%%%%%%%%%%%%%%%%%%%%%%%%%%%%%

%%%%%%%%%%%%%%%%%%%%%%%%%%%%%%%%%%%%%%%%

\newcommand{\norm}[2]{\left\|{#1}\right\|_{#2}}

%%%%%%%%%%%%%%%%%%%%%%%%%%%%%%%%%%%%%%%%
% --------------------------------------------------------------------------

% --------------------------------------------------------------------------
% 

\title{\sffamily Numerical weighted integration of functions having mixed smoothness}

\author[a]{Dinh D\~ung}
\affil[a]{Information Technology Institute, Vietnam National University, Hanoi
	\protect\\
	144 Xuan Thuy, Cau Giay, Hanoi, Vietnam
	\protect\\
	Email: dinhzung@gmail.com}

\date{\today}
 \tolerance 2500
% --------------------------------------------------------------------------
\begin{document}
\maketitle

\begin{abstract}
 We investigate the  approximation of weighted integrals over $\mathbb{R}^d$  for  integrands from weighted  Sobolev spaces of mixed smoothness. We prove upper and lower bounds  of the convergence rate of optimal quadratures with respect to $n$ integration nodes for functions from  these spaces.  In the one-dimensional case $(d=1)$,  we obtain the right convergence rate of optimal quadratures. For $d \ge 2$, the upper bound is performed by sparse-grid quadratures with integration nodes on step hyperbolic crosses in the function domain $\mathbb{R}^d$.
	
	\medskip
	\noindent
	{\bf Keywords and Phrases}:   Numerical multivariate weighted integration; Quadrature; Weighted Sobolev space of mixed smoothness; Step hyperbolic crosses of integration nodes; Convergence rate. 
	
	\medskip
	\noindent
	{\bf MSC (2020)}:   65D30; 65D32; 41A25; 41A55.
	
\end{abstract}

\section{Introduction}
\label{Introduction}

The aim of the present paper is to investigate  approximation of weighted integrals over $\mathbb{R}^d$  for   integrands lying in weighted  Sobolev spaces $W^r_{1,w}(\mathbb{R}^d)$ of  mixed smoothness $r \in \mathbb{N}$. 
We want to give upper and lower bounds of the approximation error for optimal quadratures with $n$ integration nodes  over the unit ball  of $W^r_{1,w}(\mathbb{R}^d)$. 

We first introduce weighted  Sobolev spaces of  mixed smoothness.  
Let 
\begin{equation} \label{w(bx)}
	w(\bx):= w_{\lambda,a,b}(\bx) := \prod_{i=1}^d w(x_i),
\end{equation}
where
\begin{equation} \label{w(x)}
	w(x):= 	w_{\lambda,a,b}(x) := \exp (- a|x|^\lambda + b), \ \ \lambda > 1, \ \ a > 0, \ \ b \in \RR,
\end{equation} 
is a univariate Freud-type weight. The most important parameter in the weight $w_{\lambda,a,b}$ is $\lambda$. The parameter $b$ which produces only a possitive constant in the weight $w_{\lambda,a,b}$  is introduced for a certain normalization for instance, for the standard Gaussian weight which is one of the most important weights. In what follows,  we fix the  parameters $\lambda,a,b$ and  for simplicity, drop them from the notation.
Let  $1\leq p<\infty$ and $\Omega$ be a Lebesgue measurable set on $\RRd$. 
We denote by  $L_{w}^p(\Omega)$ the weighted space  of all functions $f$ on $\Omega$ such that the norm
\begin{align} \label{L-Omega}
\|f\|_{L_{w}^p(\Omega)} : = 
\bigg( \int_\Omega |f(\bx)w(\bx)|^p  \rd \bx\bigg)^{1/p} 
\end{align}
is finite. For $r \in \NN$, we define the weighted  Sobolev space $W^r_{p,w}(\Omega)$ of mixed smoothness $r$  as the normed space of all functions $f\in L_{w}^p(\Omega)$ such that the weak (generalized) partial derivative $D^{\bk} f$ belongs to $L_{w}^p(\Omega)$ for  every $\bk \in \NNd_0$ satisfying the inequality $|\bk|_\infty \le r$. The norm of a  function $f$ in this space is defined by
\begin{align} \label{W-Omega}
	\|f\|_{W^r_{p,w}(\Omega)}: = \Bigg(\sum_{|\bk|_\infty \le r} \|D^{\bk} f\|_{L_{w}^p(\Omega)}^p\Bigg)^{1/p}.
\end{align}
It is useful to notice that any function $f \in W^r_{p,w}(\RRd)$ is equivalent  in the sense of the Lesbegue measure to a continuous (not necessarily bounded) function on $\RRd$, see Lemma~\ref{lemma:continuity} below. Hence throughout the present paper,  we always assume that the functions $f \in W^r_{p,w}(\RRd)$ are  continuous. We need this assumption for  well-defined  quadratures for functions $f \in W^r_{p,w}(\RRd)$.

Let $\gamma$ be the standard $d$-dimensional Gaussian measure $\gamma$ with the density function 
$$
g(\bx) = (2\pi)^{-d/2}\exp (- |\bx|_2^2/2).
$$
The well-known  spaces $L^p(\Omega;\gamma)$ and $\Wap(\Omega; \gamma)$ 
which are used in many applications, are defined  in the same way by replacing the norm \eqref{L-Omega} with the norm
$$
\|f\|_{L^p(\Omega; \gamma)} : = 
\bigg( \int_\Omega |f(\bx)|^p \gamma(\rd \bx)\bigg)^{1/p}
=
\bigg( \int_\Omega |f(\bx)|^p  g(\bx)\rd \bx\bigg)^{1/p}.
$$
The spaces $L^p(\Omega;\gamma)$ and $\Wap(\Omega; \gamma)$ can be seen as the   $L_{w}^p(\Omega)$ and $W^r_{p,w}(\Omega)$,
where
$$
w(\bx)=w_{\lambda ,a,b}(\bx), \ \  \text{with} \ \ \lambda = 2/p, \  a= 1/2p,  \ b= -(d \log 2\pi)/2p
$$
for  a fixed $1 \le p < \infty$.

In the present paper, we are interested in approximation of   weighted integrals 
\begin{equation} \label{If}
\int_{\RRd} f(\bx) w(\bx) \, \rd\bx 
\end{equation}
for functions $f$ lying  in the space 
$W^r_{1,w}(\mathbb{R}^d)$.
To approximate them we use quadratures  of the form
\begin{equation} \label{Q_nf-introduction}
	Q_kf: = \sum_{i=1}^k \lambda_i f(\bx_i), 
\end{equation}
where $\bx_1,\ldots,\bx_k \in \RRd$  are  the integration nodes and $\lambda_1,\ldots,\lambda_k$ the integration weights.  For convenience, we assume that some of the integration nodes may coincide. 

Let $\bF$ be a set of continuous functions on $\RRd$.  Denote by $\Qq_n$  the family of all quadratures $Q_k$ of the form \eqref{Q_nf-introduction} with $k \le n$. The optimality  of  quadratures from $\Qq_n$ for  $f \in \bF$  is measured by 
\begin{equation} \label{Int_n}
\Int_n(\bF) :=\inf_{Q_n \in \Qq_n} \ \sup_{f\in \bF} 
\bigg|\int_{\RRd} f(\bx) w(\bx) \, \rd\bx - Q_nf\bigg|.
\end{equation}

We recall that the space $\Wap(\Omega)$ is defined  as the classical Sobolev space of mixed smoothness by replacing $L_{w}^p(\Omega)$ with $L^p(\Omega)$ in \eqref{W-Omega}, where as usually,   $L^p(\Omega)$ denotes the Lebesgue space of functions on $\Omega$ equipped with the usual $p$-integral norm. 

For approximation of integrals 
$$
\int_\Omega f(\bx) \rd\bx
$$
over the set $\Omega$, we need natural modifications $Q_n^\Omega f$ 	for functions $f$ on $\Omega$,  and $\Int_n^\Omega(\bF)$  for  a set $\bF$ of  functions on  $\Omega$, of the definitions \eqref{Q_nf-introduction} and \eqref{Int_n}. For simplicity we will drop $\Omega$ from these notations if there is no misunderstanding.

We first briefly describe the main results of the present paper and then give comments on related works.

For a normed space $X$ of functions on $\RRd$, the boldface $\bX$ denotes the unit ball in $X$.  Throughout the present paper we make use of  the notation
$$r_\lambda:= (1 - 1/\lambda)r.$$ 
For  the set $\bW^r_{1,w}(\RRd))$, we prove the upper and lower bounds 
\begin{equation}\label{Int_n(W^r_1)}
	n^{-r_\lambda} (\log n)^{r_\lambda(d-1)} 
	\ll	
	\Int_n(\bW^r_{1,w}(\RRd)) 
	\ll 
	n^{-r_\lambda} (\log n)^{(r_\lambda + 1)(d-1)},
\end{equation}
in particular, in the case of Gaussian measure
\begin{equation}\label{Int_n(W^r_1)-tau=2}
	n^{-r/2} (\log n)^{r(d-1)/2} 
	\ll	
	\Int_n(\bW^r_1(\RRd; \gamma)) 
	\ll 
	n^{-r/2} (\log n)^{(r/2 + 1)(d-1)}.
\end{equation}
In  the one-dimensional  case, we prove the right convergence rate
\begin{equation}\label{Int_n(W)-introduction-d=1}
	\Int_n(\bW^r_{1,w}(\RR)) 
	\asymp 
	n^{-r_\lambda}.
\end{equation}
The difference between the upper and lower bounds in \eqref{Int_n(W^r_1)} is the logarithmic factor 
$(\log n)^{d-1}$.

There is a large number of works on high-dimensional unweighted integration over the unit $d$-cube $\IId:=[0,1]^d$ for functions having a mixed smoothness (see  \cite{DTU18B, DKS13, Tem18B} for results and bibliography). However, there are  only a few works  on high-dimensional weighted integration for functions having a mixed smoothness. The problem of  optimal  weighted integration \eqref{If}--\eqref{Int_n} has been studied in \cite{IKLP2015, IL2015,DILP18}  for functions in certain Hermite spaces, in particular, the space $\Hh_{d,r}$ which coincides with $W^r_2(\mathbb{R}^d;\gamma)$ in terms of norm equivalence.
It has been proven in \cite{DILP18} that
\begin{equation*}\label{DILP18}
	n^{-r}  (\log n)^{(d-1)/2}
	\ll	
	\Int_n\big( \bW^r_2(\RRd; \gamma))  
	\ll 
	n^{-r} (\log n)^{d(2r + 3)/4 - 1/2}.
\end{equation*}
Recently, in \cite[Theorem 2.3]{DK2022} for the space $W^r_p(\mathbb{R}^d, \gamma)$ with  $r\in \NN$ and $1<p<\infty$, we have constructed  an  asymptotically optimal quadrature  $Q_n^\gamma$ of the form \eqref{Q_nf-introduction} which  gives the  asymptotic order  
\begin{equation} 	\label{AsympQuadrature}
	\sup_{f\in \bW^r_p(\RRd; \gamma)} \bigg|\int_{\RRd}f(\bx) \gamma(\rd\bx) - Q_n^\gamma f\bigg| 
	\asymp
	\Int_n\big(\bW^r_p(\RRd; \gamma) \big) 
	\asymp
	n^{-r} (\log n)^{(d-1)/2}.
\end{equation}
The results \eqref{Int_n(W^r_1)-tau=2}  and \eqref{AsympQuadrature} show a substantial difference of the convergence rates between the cases $p=1$ and $1 < p < \infty$.
In constructing the asymptotically optimal quadrature $Q_n^\gamma$ in \eqref{AsympQuadrature}, we used a technique collaging a quadrature for the Sobolev spaces on the unit $d$-cube to the  integer-shifted $d$-cubes. Unfortunately, this technique is not suitable to constructing a quadrature   realizing the upper bound in \eqref{Int_n(W^r_1)} for the space  
$W^r_1(\mathbb{R}^d; \gamma)$ which is the largest among the spaces $W^r_p(\mathbb{R}^d; \gamma)$ with $1\le p < \infty$. It requires  a different technique based on the well-known Smolyak algorithm. Such a quadrature relies on  sparse grids of  integration nodes which are step hyperbolic crosses in the function domain $\RRd$,  and some generalization of the results on univariate numerical integration by truncated Gaussian quadratures from \cite{DM2003}. To prove the lower bound in \eqref{Int_n(W^r_1)} and \eqref{Int_n(W)-introduction-d=1} we adopt a traditional technique to construct for arbitrary $n$ integration nodes a fooling function vanishing at these nodes. 

It is interesting to compare the results  \eqref{Int_n(W^r_1)-tau=2}  and \eqref{AsympQuadrature} on	$\Int_n\big(\bW^r_p(\RRd; \gamma)\big)$ with known results  on
$\Int_n\big(\bW^r_p(\IId)\big)$ for the unweighted Sobolev space $W^r_p(\IId)$ of mixed smoothness $r$. For  $1<p<\infty$, there holds the asymptotic order
\begin{equation*}\label{Int_n-p>1-unweighted}
	\Int_n\big(\bW^r_p(\IId)\big) 
	\asymp 
	n^{-r} (\log n)^{(d-1)/2},
\end{equation*}
and for $p=1$ and $r > 1$, there hold the  bounds
\begin{equation*}\label{Int_n(W)IId}
	n^{-r} (\log n)^{(d-1)/2} 
	\ll	
	\Int_n(\bW^r_1(\IId)) 
	\ll 
	n^{-r} (\log n)^{d-1}
\end{equation*}
which are so far  the best known (see, e.g., \cite[Chapter 8]{DTU18B}, for detail).	
Hence we can see that $\Int_n\big(\bW^r_p(\RRd; \gamma)\big)$ and $\Int_n\big(\bW^r_p(\IId)\big)$ have the same asymptotic order  in the case 	$1<p<\infty$, and very different lower and upper bounds in both power and   logarithmic terms in the case $p=1$.
The right asymptotic orders of the both $\Int_n\big(\bW^r_1(\IId)\big)$ and $\Int_n\big(\bW^r_1(\RRd; \gamma)\big)$ are still open problems (cf. \cite[Open Problem 1.9]{DTU18B}).

The problem of numerical integration considered in the present paper is  related to  the research direction of optimal approximation and integration for functions having mixed smoothness on one hand, and the other research  direction of univariate weighted polynomial approximation and integration on $\RR$, on the other hand.  For survey and bibliography, we refer the reader to the books  \cite{DTU18B,Tem18B} on the first direction, and \cite{Mha1996B,Lu07B,JMN2021} on the second one.

The paper is organized as follows. In Section \ref{Univariate integration}, we prove the asymptotic order 
 of $\Int_n(\bW^r_{1,w}(\RR))$ and construct asymptotically optimal quadratures. In Section \ref{Multivariate integration}, we prove upper and lower bounds of $\Int_n(\bW^r_{1,w}(\RRd))$ for $d \ge 2$, and construct quadratures which give the upper bound. Section \ref{Extension} is devoted to some extentions of the results in the previous sections to Markov-Sonin weights.

%%%%%%%%%%%%%%%%%%%%%%%%%%%%%%%%%%
\medskip
\noindent
{\bf Notation.} 
 Denote  $\bone:= (1,...,1) \in \RRd$; for $\bx \in \RRd$, $\bx=:\brac{x_1,...,x_d}$,
$|\bx|_\infty:= \max_{1\le j \le d} |x_j|$, $|\bx|_p:= \brac{\sum_{j=1}^d |x_j|^p}^{1/p}$ $(1 \le p < \infty)$.  For $\bx, \by \in \RRd$, the inequality $\bx \le \by$ means $x_i \le y_i$ for every $i=1,...,d$.  For $x \in \RR$, denote $\sign (x):= 1$ if $x \ge 0$, and $\sign (x):= -1$ if  $x < 0$. We use letters $C$  and $K$ to denote general 
positive constants which may take different values. For the quantities $A_n(f,\bk)$ and $B_n(f,\bk)$ depending on 
$n \in \NN$, $f \in W$, $\bk \in \ZZd$,  
we write  $A_n(f,\bk) \ll B_n(f,\bk)$, $f \in W$, $\bk \in \ZZd$ ($n \in \NN$ is specially dropped),  
if there exists some constant $C >0$ such that 
$A_n(f,\bk) \le CB_n(f,\bk)$ for all $n \in \NN$,  $f \in W$, $\bk \in \ZZd$ (the notation $A_n(f,\bk) \gg B_n(f,\bk)$ has the obvious opposite meaning), and  
$A_n(f,\bk) \asymp B_n(f,\bk)$ if $A_n(f,\bk) \ll B_n(f,\bk)$
and $B_n(f,\bk) \ll A_n(f,\bk)$.  Denote by $|G|$ the cardinality of the set $G$. 
%, and 
%$C_{\alpha,\beta,...}$ and $K_{\alpha,\beta,...}$  when we 
%want to emphasize the dependence of these  constants on 
%$\alpha,\beta,...$, or when this dependence is 
%important in a particular situation. 
For a Banach space $X$, denote by the boldface $\bX$ the unit ball in $X$.
%%%%%%%%%%%%%%%%%%%%%%%%%%%%%%%%%
%%%%%%%%%%%%%%%%%%%%%%%%%%%%%%%%%

	\section{One-dimensional integration}
\label{Univariate integration}

In this section, for one-dimensional numerical integration, we prove the asymptotic order of  $\Int_n\big(\bW^r_{1,w}(\RR)\big)$ and present some asymptotically optimal quadratures.
We start this section with a well-known inequality in the following lemma which is implied directly from the definition \eqref{Int_n} and which is quite useful  for lower estimation of $\Int_n(\bF)$.
 
\begin{lemma} \label{lemma:Int_n>}
Let $\bF$ be a set of continuous functions on $\RRd$. 	Then we have
\begin{equation} \label{Int_n>}
	\Int_n(\bF) \ge \inf_{\brab{\bx_1,...,\bx_n} \subset \RRd} \ \sup_{f\in \bF: \ f(\bx_i)= 0,\ i =1,...,n}\bigg|\int_{\RRd} f(\bx) w(\bx) \, \rd\bx\bigg|.
\end{equation}
\end{lemma}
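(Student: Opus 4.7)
The plan is to unwind the definition of $\Int_n(\bF)$ and observe that for any quadrature $Q_n \in \Qq_n$ with nodes $\bx_1,\ldots,\bx_n$, the integration weights become irrelevant once the integrand is constrained to vanish at all of those nodes. This is the standard ``fooling function'' lower-bound trick and should be a one-step verification.

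Concretely, I would first fix an arbitrary quadrature $Q_n \in \Qq_n$, written as $Q_nf = \sum_{i=1}^n \lambda_i f(\bx_i)$ (with repetitions allowed, as stipulated after \eqref{Q_nf-introduction}). Then I would pass to the subset of the unit ball consisting of functions that annihilate the nodes, namely $\bF_0 := \{f \in \bF : f(\bx_i) = 0,\ i = 1,\ldots,n\}$. For any $f \in \bF_0$ one has $Q_n f = 0$ irrespective of the weights $\lambda_i$, so
\begin{equation*}
\sup_{f \in \bF} \bigg|\int_{\RRd} f(\bx) w(\bx)\, \rd \bx - Q_n f\bigg|
\ \ge\
\sup_{f \in \bF_0} \bigg|\int_{\RRd} f(\bx) w(\bx)\, \rd \bx\bigg|.
\end{equation*}

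Next, I would take the infimum over $Q_n \in \Qq_n$ on both sides. On the left this produces $\Int_n(\bF)$ by definition \eqref{Int_n}. On the right the bound depends only on the node set $\{\bx_1,\ldots,\bx_n\}$, not on the weights $\lambda_i$, so the infimum over $Q_n \in \Qq_n$ reduces to an infimum over $n$-point subsets of $\RRd$, yielding exactly the right-hand side of \eqref{Int_n>}.

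Since both steps are elementary, I do not anticipate any real obstacle; the only small point to be careful about is making the argument work uniformly when some nodes coincide, which is automatic because $Q_n f$ remains a linear combination of values $f(\bx_i)$ and these values are all forced to be zero by the constraint defining $\bF_0$. The lemma will then be used later to produce lower bounds by exhibiting, for each node configuration, a concrete continuous ``fooling'' function in $\bF$ that vanishes at the nodes and has large weighted integral, as anticipated in the discussion preceding \eqref{Int_n(W^r_1)}.
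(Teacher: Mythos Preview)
Your proposal is correct and matches the paper's approach: the paper does not give a separate proof but simply states that the inequality ``is implied directly from the definition \eqref{Int_n},'' which is exactly the one-step verification you outline. Your handling of the case $k\le n$ via the convention that nodes may coincide (so any $Q_k\in\Qq_n$ can be written with exactly $n$ nodes) is the only detail worth mentioning, and you address it correctly.
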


%We recall that the space $\Wap(\Omega)$ is defined  as the classical Sobolev space by replacing $L_{w}^p(\Omega)$ with $L^p(\Omega)$ in \eqref{W-Omega}, where as usually,   $L^p(\Omega)$ denotes the Lebesgue space of functions on $\Omega$ equipped with the usual $p$-integral norm. 
%
% For approximation of integrals 
%	$$
%\int_\Omega f(\bx) \rd\bx
%	$$
%	over the set $\Omega$, we need natural modifications $Q_n^\Omegaf$ 	for functions $f$ on $\Omega$,  and $\Int_n^\Omega(\bF)$  for  a set $\bF$ of  functions on  $\Omega$, of the definitions \eqref{Q_nf-introduction} and \eqref{Int_n}. For simplicity we will drop $\Omega$ from these notations if there is no misunderstanding.
	
We now consider the problem of approximation of  integral \eqref{If} for univariate functions from 
$W^r_{1,w}(\RR)$. Let $(p_m(w))_{m \in \NN}$ be the sequence of orthonormal polynomials with respect to the weight $w$. In the classical quadrature theory,  a possible choice of integration nodes is to take the zeros of the polynomials $p_m(w)$.
Denote by $x_{m,k}$, $1 \le k \le \lfloor m/2 \rfloor$ the positive zeros of 	$p_m(w)$, and by $x_{m,-k} = - x_{m,k}$ the negative ones (if $m$ is odd, then $x_{m,0}= 0$ is also a zero of $p_m(w)$). These zeros are located as 
\begin{equation}\label{zeros-location}
-a_m + \frac{Ca_m}{m^{2/3}} < x_{m,- \lfloor m/2 \rfloor} < \cdots 
< x_{m,-1} < x_{m,1} < \cdots <  x_{m,\lfloor m/2 \rfloor} \le a_m - \frac{Ca_m}{m^{2/3}}, 
\end{equation}
with a positive constant $C$ independent of $m$ (see, e. g., \cite[(4.1.32)]{JMN2021}). Here $a_m$ is the Mhaskar-Rakhmanov-Saff number which is
\begin{equation}\label{a_m}
	a_m = a_m(w)= (\gamma_\lambda m)^{1/\lambda}, \ \ \gamma_\lambda:= \frac{2 \Gamma((1+\lambda)/2)}{\sqrt{\pi} \Gamma(\lambda/2)},
\end{equation}
and $\Gamma$ is the gamma function. Notice that the formula \eqref{a_m} is given in \cite[(4.1.4)]{JMN2021} for the weight $w(x) = \exp \brac{-|x|^\lambda}$. Inspecting the definition of Mhaskar-Rakhmanov-Saff number (see, e.g., \cite[Page 116]{JMN2021}), one easily verify that it still holds true for the general weight $w$ for any $a >0$ and  $b\in \RR$.

 For a continuous function on $\RR$, the classical Gaussian quadrature is defined as
\begin{equation} \label{G-Q_mf}
	Q^{\rm G}_mf: = \sum_{|k| \le \lfloor m/2 \rfloor} \lambda_{m,k}(w) f(x_{m,k}), 
\end{equation} 
where $\lambda_{m,k}(w)$ are the corresponding Cotes numbers. This quadrature is based on Lagrange interpolation (for details, see, e.g., \cite[1.2. Interpolation and quadrature]{Mha1996B}). Unfortunately, it does not give the optimal convergence rate for functions from $\bW^r_{1,w}(\RR)$, see Remark \ref{Comment on G-quadrature} below. 

In \cite{DM2003}, for the weight $w(x) = \exp \brac{-|x|^\lambda}$, the authors proposed truncated Gaussian quadratures which not only improve the convergence rate but also give the asymptotic order of  $\Int_n\big(\bW^r_{1,w}(\RR)\big)$ as shown in Theorem \ref{thm:Q_n-d=1} below. Let us introduce  in the same manner truncated Gaussian quadratures for the weight 
$w(x)$ with any $a >0$ and  $b\in \RR$.

Throughout this paper, we fix a number $\theta$ with $0 < \theta < 1$, and denote by $j(m)$ the smallest integer satisfying $x_{m,j(m)} \ge \theta a_m$. 
It is useful to remark that
\begin{equation} \label{x_k+1 - x_k}
d_{m,k} \, \asymp  \, \frac{a_m}{m} \asymp m^{1/\lambda -1}, \ \ |k| \le j(m); \quad x_{m,j(m)} \, \asymp \, m^{1/\lambda},
\end{equation} 
where $d_{m,k}:= x_{m,k} - x_{m,k-1}$ is the distance between consecutive zeros of  the polynomial $p_m(w)$. These relations were proven  in \cite[(13)]{DM2003} for the weight $w(x) = \exp \brac{-|x|^\lambda}$. 
From their proofs there, one can easily see that they are still hold true for the general case of the weight $w$. By  \eqref{zeros-location} and \eqref{x_k+1 - x_k}, for $m$ sufficiently large we have that 
\begin{equation} \label{<j(m)<}
Cm \le j(m) \le m/2
\end{equation} 
with a positive constant $C$ depending on $\lambda, a, b$ and $\theta$ only. 

For a continuous function on $\RR$, consider 
the truncated Gaussian quadrature 
\begin{equation} \label{G-Q_mf-TG}
	Q^{\rm{TG}}_{2 j(m)}f: = \sum_{|k| \le j(m)} \lambda_{m,k}(w) f(x_{m,k}). 
\end{equation} 
Notice that the number $2j(m)$ of samples in the quadrature $Q^{\rm{TG}}_{2 j(m)}f$ is strictly smalller than $m$ -- the number of samples in the quadrature  $Q^{\rm G}_mf$.   However, due to \eqref{<j(m)<} it has the asymptotic order as $2 j(m)\asymp m$ when $m$ going to infinity.

\begin{theorem} \label{thm:Q_n-d=1}
	For any $n \in \NN$, let $m_n$ be the largest integer such that $2 j(m_n) \le n$. Then the quadratures	$Q^{\rm{TG}}_{2 j(m_n)} \in \Qq_n$, $n \in \NN$, are  asymptotically optimal  for $\bW^r_{1,w}(\RR)$ and
	\begin{equation}\label{Q_n-d=1}
		\sup_{f\in \bW^r_{1,w}(\RR)} \bigg|\int_{\RR}f(x) w(x) \rd x - Q^{\rm{TG}}_{2 j(m_n)}f\bigg| 
		\asymp
		\Int_n\big(\bW^r_{1,w}(\RR)\big) 
		\asymp 
		n^{- r_\lambda}.
	\end{equation}
\end{theorem}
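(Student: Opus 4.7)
The plan is to analyze the truncated Gaussian quadrature $Q^{\rm{TG}}_{2j(m_n)}$ by adapting the approach of \cite{DM2003} for the Freud weight $\exp(-|x|^\lambda)$. First I would reduce the general weight $w_{\lambda,a,b}$ to this canonical case via the linear substitution $y=a^{1/\lambda}x$, under which the zeros $x_{m,k}$, the Cotes numbers $\lambda_{m,k}(w)$, the number $a_m$, and the norm $\|\cdot\|_{W^r_{1,w}}$ all rescale by explicit powers of $a^{1/\lambda}$ and $e^b$. For $f\in\bW^r_{1,w}(\RR)$, let $P_{m-1}$ be a near-best weighted polynomial approximant of degree at most $m-1$. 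Using that the full Gaussian quadrature is exact on $P_{m-1}$, decompose
\begin{equation*}
\int_\RR f w\,\rd x - Q^{\rm{TG}}_{2j(m)}f = \int_\RR (f-P_{m-1}) w\,\rd x \,-\, \sum_{|k|\le j(m)} \lambda_{m,k}(w)(f-P_{m-1})(x_{m,k}) \,-\, \sum_{|k|> j(m)} \lambda_{m,k}(w) P_{m-1}(x_{m,k}).
\end{equation*}
The first two terms are bounded by the weighted Jackson-type inequality $E_{m-1}(f)_{L^1_w} \ll (a_m/m)^r \asymp m^{-r_\lambda}$, valid for Freud weights and $f\in\bW^r_{1,w}(\RR)$. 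The last (tail) term is controlled by the Mhaskar--Saff infinite--finite range inequality, which forces $|P_{m-1}(x)w(x)|$ to decay exponentially in $m$ on $|x|>\theta a_m$, combined with the summability $\sum_{|k|> j(m)}\lambda_{m,k}(w)\ll 1$. Since $2j(m_n)\asymp m_n\asymp n$, these estimates together yield the upper bound $n^{-r_\lambda}$.

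\textbf{Lower bound.} I would invoke Lemma~\ref{lemma:Int_n>} and construct, for an arbitrary configuration of nodes $x_1,\ldots,x_n\in\RR$, a fooling bump $f_n$ that vanishes at these nodes, lies (up to a fixed constant) in $\bW^r_{1,w}(\RR)$, and satisfies $|\int_\RR f_n w\,\rd x| \gg n^{-r_\lambda}$. Apply the pigeonhole principle to the interval $[-\theta a_n,\theta a_n]$, whose length is $\asymp n^{1/\lambda}$: the given nodes (together with the endpoints) leave a gap $(y_1,y_2)\subset[-\theta a_n,\theta a_n]$ of length $h:=y_2-y_1 \gg n^{1/\lambda-1}$ free of nodes. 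Fix a nonnegative $\phi\in C^\infty_c(-1,1)$ with $\int\phi>0$, set $x_0=(y_1+y_2)/2$, and put
\begin{equation*}
f_n(x) := \frac{h^{r-1}}{w(x_0)}\,\phi\!\left(\frac{2(x-x_0)}{h}\right).
\end{equation*}
The key observation is that the gap length $h\asymp n^{1/\lambda-1}$ matches the local decay length $|x_0|^{1-\lambda}$ of $w$ on $[-\theta a_n,\theta a_n]$, so $w(x)\asymp w(x_0)$ on $\supp f_n$. Differentiating the bump then gives $\|f_n^{(k)}\|_{L^1_w} \ll h^{r-k}\ll 1$ for $0\le k\le r$, so $\|f_n\|_{W^r_{1,w}(\RR)}\ll 1$, while $|\int_\RR f_n w\,\rd x| \asymp h^r \asymp n^{-r_\lambda}$. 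After dividing by a fixed constant, this furnishes the required fooling function.

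\textbf{Main obstacle.} I expect the delicate point to be the tail term of the upper-bound decomposition, where the Cotes contributions with $|k|>j(m)$ are simply discarded. Preserving the sharp exponent $r_\lambda$ there (rather than losing a power of $\log n$) requires combining the Mhaskar--Saff infinite--finite range inequality for $P_{m-1}w$ with precise Christoffel-function estimates for Freud weights, and then transferring these bounds from the canonical weight $\exp(-|x|^\lambda)$ to $w_{\lambda,a,b}$ via the rescaling while tracking the dependence on $a$ and $b$; the latter is routine but tedious.
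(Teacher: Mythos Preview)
Your plan is sound and is essentially a reconstruction of the proof of \cite[Corollary~4]{DM2003}. The paper does not redo this argument: it simply quotes the inequality
\[
\bigg|\int_{\RR}fw\,\rd x - Q^{\rm TG}_{2j(m)}f\bigg|\le C\Big(m^{-(1-1/\lambda)r}\|f^{(r)}\|_{L^1_w(\RR)}+e^{-Km}\|f\|_{L^1_w(\RR)}\Big)
\]
from that reference, observes that its proof carries over verbatim to general $a>0$, $b\in\RR$ (your rescaling $y=a^{1/\lambda}x$ makes this explicit), and reads off the upper bound. So on this half you and the paper agree; the paper is just more economical by citing rather than re-deriving.

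\textbf{Lower bound.} Here your construction genuinely differs from the paper's. The paper partitions $[n^{1/\lambda},\,(2n{+}2)n^{1/\lambda-1}]$ into $n{+}2$ equal cells of width $\delta=n^{1/\lambda-1}$, finds an empty one $(t_{i-1},t_i)$, and takes the fooling function $h=g\,w^{-1}$ with $g$ a bump supported there. Because $h$ carries the factor $w^{-1}(x)$ (not a constant), the paper must compute $(w^{-1})^{(s)}$ explicitly via \eqref{w^(s)} and track the resulting powers $|x|^{\lambda_{s,j}}$; the placement at $|x|\asymp n^{1/\lambda}$ is what makes all those terms balance. Your route---place the bump anywhere in $[-\theta a_n,\theta a_n]$ and divide by the \emph{constant} $w(x_0)$, then use $w(x)\asymp w(x_0)$ on the support---avoids the Leibniz expansion entirely and is cleaner. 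What you lose is that the paper's construction transfers directly to the $d$-dimensional lower bound in Theorem~\ref{theorem:Int_n(W)}, where the product structure $h(\bx)=\prod_i g_i(x_i)w^{-1}(x_i)$ is used; your variant would need its own tensorisation.

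\textbf{One gap to close.} Your pigeonhole only gives $h\gg n^{1/\lambda-1}$, not $h\asymp n^{1/\lambda-1}$. If the nodes happen to cluster, the empty gap can have length up to $\asymp n^{1/\lambda}$, and then your norm estimate $\|f_n^{(k)}\|_{L^1_w}\ll h^{r-k}\ll 1$ fails for $k<r$ (and the equivalence $w\asymp w(x_0)$ on the support can also break). The fix is immediate: once you have located an empty gap of length $\ge c\,n^{1/\lambda-1}$, pass to a sub-interval of length \emph{exactly} $c\,n^{1/\lambda-1}$ and run the argument there. With that adjustment your lower bound goes through.
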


\begin{proof} 
For $f \in W^r_{1,w}(\RR)$, there holds the inequality
	\begin{equation}\label{DellaVecchia2003}
	\bigg|\int_{\RR}f(x) w(x) \rd x - Q^{\rm{TG}}_{2 j(m)}f\bigg| 
	\le 
C\brac{m^{-(1 - 1/\lambda)r} \norm{f^{(r)}}{L^1_w(\RR)} + e^{-Km}\norm{f}{L^1_w(\RR)} } 
\end{equation}
with some constants $C$ and $K$ independent of $m$ and $f$. This inequality was proven in \cite[Corollary 4]{DM2003} for the weight $w(x) = \exp \brac{-|x|^\lambda}$.  Inspecting  the proof of \cite[Corollary 4]{DM2003}, one can easily see that this inequality is also true for a weight of the form \eqref{w(bx)} with any $a >0$ and $b \in \RR$. The inequality \eqref{DellaVecchia2003} implies the upper bound in \eqref{Q_n-d=1}:
	\begin{equation*}\label{Q_n-d=1(2)}
\Int_n\big(\bW^r_{1,w}(\RR)\big)  \le 
	\sup_{f\in \bW^r_{1,w}(\RR)} \bigg|\int_{\RR}f(x) w(x) \rd x - Q^{\rm{TG}}_{2 j(m_n)}f\bigg| 
	\ll
	n^{- r_\lambda}.
\end{equation*}

The lower bound in \eqref{Q_n-d=1} is already contained in Theorem \ref{theorem:Int_n(W)} below. Since its proof is much simpler for the case $d=1$, let us proccess it separately. In order to prove the lower bound in \eqref{Q_n-d=1} we will apply Lemma \ref{lemma:Int_n>}. Let $\brab{\xi_1,...,\xi_n} \subset \RR$ be arbitrary $n$ points. For a given $n \in \NN$, we put $\delta = n^{1/\lambda - 1}$ and $t_j = \delta j$, $j \in \NN_0$. Then there is $i \in \NN$ with 
$n + 1 \le  i \le 2n + 2$ such that the interval $(t_{i-1}, t_i)$ does not contain any point from the set $\brab{\xi_1,...,\xi_n}$.
Take a nonnegative function $\varphi \in C^\infty_0([0,1])$, $\varphi \not= 0$, and put
	\begin{equation*}\label{b_s}
b_0:= \int_0^1 \varphi(y) \rd y > 0, \quad b_s :=  \int_0^1 |\varphi^{(s)}(y)| \rd y, \ s = 1,...,r.
\end{equation*}		
Define the functions $g$ and $h$ on $\RR$ by
\begin{equation*}\label{g}
g(x):= 
\begin{cases}
\varphi(\delta^{-1}(x - t_{i-1})), & \ \ x \in (t_{i-1}, t_i), \\
0, & \ \ \text{otherwise},	
\end{cases}	
\end{equation*}
and
	\begin{equation*}\label{h}
		h(x):= (gw^{-1})(x).		
	\end{equation*}
Let us estimate the norm $\norm{h}{W^r_{1,w}(\RR)}$. For a given $k \in \NN_0$ with $0 \le k \le r$,  we have
	\begin{equation}\label{h^(s)}
	h^{(k)} = (gw^{-1})^{(k)} = \sum_{s=0}^k \binom{k}{s} g^{(k-s)}(w^{-1})^{(s)}. 		
\end{equation}
By a direct computation we find that   for $x \in \RR$,
	\begin{equation}\label{w^(s)}
(w^{-1})^{(s)}(x) = (w^{-1})(x) (\sign (x))^s\sum_{j=1}^s c_{s,j}(\lambda,a) |x|^{\lambda_{s,j}},
\end{equation}
where  $\sign (x):= 1$ if $x \ge 0$, and $\sign (x):= -1$ if  $x < 0$, 
\begin{equation}\label{lambda_{s,s}}
\lambda_{s,s} = s(\lambda - 1) > \lambda_{s,s-1} > \cdots > \lambda_{s,1} = \lambda - s,
\end{equation}
 and  $c_{s,j}(\lambda,a)$ are polynomials in the variables $\lambda$ and $a$ of degree at most $s$ with respect to each variable.
Hence, we obtain
	\begin{equation}\label{h^(s)w(x)}
	h^{(k)}(x) w(x)= \sum_{s=0}^k \binom{k}{s} g^{(k-s)}(x) (\sign (x))^s\sum_{j=1}^s c_{s,j}(\lambda, a) |x|^{\lambda_{s,j}} 		
\end{equation}
which implies that 
	\begin{equation}\nonumber
	\int_{\RR}|h^{(k)} w|(x) \rd x 
	\le  C \max_{0\le s \le k} \ \max_{1 \le j \le s}  \int_{t_{i-1}}^{t_i} |x|^{\lambda_{s,j}}|g^{(k-s)}(x)| \rd x.		
\end{equation}
From \eqref{lambda_{s,s}},  the inequality $n^{1/\lambda} \le x \le (2n + 2)n^{1/\lambda - 1}$ and 
\begin{equation*}\label{int-g^(k-s)}
  \int_{t_{i-1}}^{t_i} |g^{(k-s)}(x)| \rd x = b_{k-s} \delta^{-k+s+1} = 	b_{k-s} n^{(k-s-1)(1 - 1/\lambda)},
\end{equation*}
we derive
	\begin{equation*}\label{int-h^(s)w}
		\begin{aligned}
	\int_{\RR}|h^{(k)} w|(x) \rd x 
	&\le  C \max_{0\le s \le k}   \int_{t_{i-1}}^{t_i} |x|^{\lambda_{s,s}}|g^{(k-s)}(x)| \rd x
	\\
	& \le  C \max_{0\le s \le k}   \brac{n^{1/\lambda}}^{s(\lambda - 1)}
	\int_{t_{i-1}}^{t_i}|g^{(k-s)}(x)| \rd x
	\\& 
	\le  C \max_{0\le s \le k} n^{s(\lambda - 1)/\lambda}		 n^{(k-s-1)(1 - 1/\lambda)}
	\\
	&= C n^{(1-1/\lambda)(k-1)} \le  C n^{(1-1/\lambda)(r-1)}.		
	\end{aligned}
\end{equation*}
%Obviously, we have also  $\int_{\RR}|hw|(x) \rd x  \le C n^{(1-1/\lambda)(k-1)}$. 
If we define 
	\begin{equation*}\label{h-bar}
	\bar{h}:=  C^{-1} n^{-(1-1/\lambda)(r-1)}h,
\end{equation*}
then $\bar{h}$ is nonnegative, 
$\bar{h} \in \bW^r_{1,w}(\RR)$, $\sup \bar{h} \subset (t_{i-1},t_i)$ and
	\begin{equation*}\label{int-h^(s)w2}
	\begin{aligned}	
		\int_{\RR}(\bar{h}w)(x) \rd x 
	&=  C^{-1} n^{-(1-1/\lambda)(r-1)}\int_{t_{i-1}}^{t_i}g(x) \rd x
	\\
	&
	=  C^{-1} n^{-(1-1/\lambda)(r-1)} b_0 \delta  \gg n^{-(1-1/\lambda)r}
	\end{aligned}
\end{equation*}
 Since the interval $(t_{i-1}, t_i)$ does not contain any point from the set $\brab{\xi_1,...,\xi_n}$, we have $\bar{h}(\xi_k) = 0$, $k = 1,...,n$. Hence, by Lemma \ref{lemma:Int_n>},
%  and \eqref{int-h^(s)w},
	\begin{equation}\nonumber
		\Int_n\big(\bW^r_{1,w}(\RR)\big) 	\ge  \int_{\RR}\bar{h}(x) w(x) \rd x
	 \gg  n^{- r_\lambda}.
\end{equation}
The lower bound in \eqref{Q_n-d=1} is proven.
	\hfill
\end{proof}

\begin{remark} \label{Comment on G-quadrature}
{\rm	
In the case when $w(x)= \exp (- x^2/2)$ is the Gaussian density, the truncated Gaussian quadratures $Q^{\rm{TG}}_{2 j(m)}$ in Theorem \ref{thm:Q_n-d=1} give
\begin{equation}\label{Q_n-d=1-tau=2}
		\sup_{f\in {\bW}^r_{1,w}(\RR)} \bigg|\int_{\RR}f(x) w(x) \rd x - Q^{\rm{TG}}_{2 j(m_n)}f\bigg| 
\asymp
	\Int_n\big({\bW}^r_{1,w}(\RR)\big) 
	\asymp 
	n^{-r/2}.
\end{equation}
On the other hand, for the full Gaussian quadratures $Q^{\rm G}_n$,  it has been proven in \cite[Proposition 1]{DM2003}	the convergence rate
\begin{equation}\nonumber
	\sup_{f\in \bW^1_{1,w}(\RR)}	\bigg|\int_{\RR}f(x) w(x) \rd x  - Q^{\rm G}_nf\bigg|
	\asymp
	n^{-1/6}
\end{equation}
which is much worse than the convergence rate of 
$	\Int_n\big(\bW^1_{1,w}(\RR)\big) \asymp 	n^{-1/2}$ as 
in \eqref{Q_n-d=1-tau=2} for $r = 1$.
}
\end{remark}
%in comparing with 
%	\begin{equation}\label{Polynomial-approxL_1}
	%	\sup_{f\in \bW^1_1(\RR,w_2)}	\inf_{P \in \Pp_m} \norm{f - P}{L_1(\RR,w_2)}
	%	\asymp
	%	m^{-1/2},
	%\end{equation}
	%which is a particular case of Proposition 1 in Appendix.
%%%%%%%%%%%%%%%%%%%%%%%%%%%%%%%%%%%%%
%%%%%%%%%%%%%%%%%%%%%%%%%%%%%%%%%%%%%%
%%%%%%%%%%%%%%%%%%%%%%%%%%%%%%%%%

%%%%%%%%%%%%%%%%%%%%%%%%%%%%%%%%%
%%%%%%%%%%%%%%%%%%%%%%%%%%%%%%%%%

	\section{High-dimensional integration}
\label{Multivariate integration}

In this section, for high-dimensional numerical integration ($d \ge 2$), we prove upper and lower bounds of   $\Int_n\big(\bW^r_{1,w}(\RRd)\big)$ and construct  quadratures based on  step-hyperbolic-cross grids of integration nodes which give the upper bounds. To do this we need some auxiliary lemmata.

\begin{lemma} \label{lemma:continuity}
	Let $1\le p < \infty$. Then any function $f \in W^r_{p,w}(\RRd)$ is equivalent  in the sense of the Lebesgue measure to  a continuous function on $\RRd$.
\end{lemma}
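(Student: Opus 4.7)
The plan is to reduce the claim to the classical unweighted mixed-smoothness Sobolev embedding on bounded boxes, which is where all the regularity actually comes from. First, I would fix an arbitrary $R>0$ and restrict $f$ to the bounded open box $\Omega_R:=\{\bx \in \RRd : |\bx|_\infty < R\}$. From \eqref{w(bx)}--\eqref{w(x)} the weight $w$ is continuous and strictly positive on $\RRd$, so there exist constants $0 < c_R \le C_R < \infty$ with $c_R \le w(\bx) \le C_R$ for all $\bx \in \Omega_R$. Consequently $L_{w}^p(\Omega_R) = L^p(\Omega_R)$ with equivalent norms, and $f|_{\Omega_R}$ belongs to the classical unweighted mixed-smoothness Sobolev space $W^r_p(\Omega_R)$.

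Next, I would invoke the embedding $W^r_p(\Omega_R) \hookrightarrow C(\overline{\Omega_R})$, valid for every $r \in \NN$ with $r \ge 1$ and every $1 \le p < \infty$. This is the step I expect to be the main obstacle, since for $d\ge 2$ and $p=1$ one cannot appeal to the naive isotropic Sobolev embedding (which would need $r>d/p$). Instead, I would use a tensor-product fundamental-theorem-of-calculus representation: fix an anchor point $\by \in \Omega_R$ and write
\begin{equation*}
 f(\bx) \;=\; \sum_{A \subseteq \{1,\dots,d\}} \int_{\prod_{i\in A}[y_i,x_i]} \Big(\!\prod_{i\in A}\partial_{x_i}\Big) f\big(\bz_A(\bt)\big)\,\rd\bt,
\end{equation*}
where $\bz_A(\bt)$ has the coordinates of $\bt$ in positions $A$ and of $\by$ elsewhere, together with analogous traces. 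Since every mixed derivative $D^{\bk} f$ with $|\bk|_\infty\le 1 \le r$ lies in $L^p(\Omega_R) \subset L^1(\Omega_R)$, each of the integrals above depends continuously on $\bx$, and the sum defines a continuous representative $\tilde f_R$ of $f$ on $\overline{\Omega_R}$.

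Finally, I would glue the local representatives. For $R<R'$ the functions $\tilde f_R$ and $\tilde f_{R'}|_{\overline{\Omega_R}}$ are both continuous on $\overline{\Omega_R}$ and each agrees with $f$ almost everywhere there; hence they coincide pointwise. Setting $\tilde f(\bx):=\tilde f_R(\bx)$ for any $R$ with $\bx \in \Omega_R$ yields a globally well-defined continuous function on $\RRd$ which equals $f$ outside a Lebesgue null set, as required. The essential work is confined to the mixed-smoothness embedding step; the passage from local to global and the reduction to unweighted spaces are routine consequences of the positivity and continuity of $w$.
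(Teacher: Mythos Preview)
Your proposal is correct and follows a somewhat different, arguably cleaner route than the paper. The paper also restricts to a cube $\TT^2=[-T,T]^2$, but rather than discarding the weight it introduces an auxiliary weight $v(\bx)=\prod_i\exp(-a|x_i|^\tau+b)$ with $\tau>\lambda$, proves the quantitative embedding $\|f\|_{C_v(\TT^2)}\ll\|f\|_{W^1_{1,w}(\TT^2)}$ for $f\in C^\infty_0(\TT^2)$ via the single-term identity $(vf)(\bx)=\int_{-T}^{x_1}\int_{-T}^{x_2}D^{(1,1)}(vf)\,\rd\bt$, and then extends by density. The exponent $\tau$ is chosen precisely so that the cross terms $|x_i|^{\tau-1}v(\bx)$ coming from the product rule can be absorbed back into $w$. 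Your observation that on a bounded cube $w$ is bounded above and below by positive constants, and can therefore simply be dropped, bypasses this auxiliary weight altogether; what remains is the classical unweighted mixed-smoothness embedding $W^1_p(\Omega_R)\hookrightarrow C(\overline{\Omega_R})$. Both arguments ultimately rest on the same tensor fundamental-theorem-of-calculus identity, but yours separates concerns: the weight is handled by a trivial local bound, and the continuity comes from the unweighted embedding. The paper's approach buys a direct weighted inequality but at the cost of the extra weight $v$; yours is more modular.

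One point to make precise in your write-up: the summands with $A\subsetneq\{1,\dots,d\}$ in your representation are trace terms, and for a general $f$ in the Sobolev space they are only well-defined for almost every anchor $\by$ (Fubini guarantees the restrictions of the mixed derivatives to the relevant coordinate sections are integrable for a.e.\ $\by$). You should either fix such a good $\by$ explicitly, or average the identity over $\by$ in a sub-box, or --- as the paper does --- first establish the sup-norm bound on a dense smooth class and pass to the limit. Any of these closes the argument; the current phrasing ``together with analogous traces'' skates over it.
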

\begin{proof}
We prove this lemma in the particular case when $p=1$, $r=1$  and $d=2$. The general case can be proven in a similar way. 

Fix $\tau >\lambda$ and define for $\bx=(x_1,x_2)$,
$$
v(\bx):= \exp(-a|x_1|^\tau  + b)\exp(-a|x_2|^\tau + b).
$$

We preliminarly prove that $W^r_{1,w}(\TT^2)$ is continuously embbeded  into the space $C_v(\TT^2)$ where $\TT^2:= [-T,T]^2$, $T$ is any positive number and $C_v(\TT^2)$ is the Banach space of continuous functions $f$ on $\TT^2$ equipped with the norm
$$
\norm{f}{C_v(\TT^2)}:= \max_{\bx \in \TT^2} |(vf)(\bx)|.
$$
Since the subspace $C^\infty_0(\TT^2)$	 of infinitely differentiable functions with compact support is dense in both the Banach spaces $C(\TT^2)$ and $W^1_{1,w}(\TT^2)$, to prove this continuous embbeding,  it is sufficient to show the inequality 
\begin{equation}\label{embbeding}
\norm{f}{C_v(\TT^2)}	\ll \norm{f}{W^1_{1,w}(\TT^2)}, \ \ f \in C^\infty_0(\TT^2).
\end{equation}
For $\bk \in \NN_0^2$, denote by $D^\bk g$ the $\bk$th partial derivative of $g$.	Taking a function $f \in C^\infty_0(\TT^2)$, we have that for $\bx \in \TT^2$,
$$
(vf)(\bx) = \int_{-T}^{x_1}\int_{-T}^{x_2} D^{(1,1)}(vf)(\bt) \rd \bt,
$$
and 
\begin{equation}\nonumber
	\begin{aligned}
		D^{(1,1)} (vf)(\bx)
		& =
		v(\bx)\big[ D^{(1,1)} f(\bx) - a \tau \sign(x_1)|x_1|^{\tau - 1} D^{(1,0)}f(\bx)
		\\
		&
		\ \ \ - a \tau \sign(x_2)|x_2|^{\tau - 1} D^{(0,1)}f(\bx)		
		+ a^2 \tau^2 \sign(x_1)|x_1|^{\tau - 1} \sign(x_2)|x_2|^{\tau - 1} f(\bx)\big].
	\end{aligned}
\end{equation}
Hence by using the inequality $\tau >\lambda$ we derive \eqref{embbeding}:
\begin{equation}\nonumber
	\begin{aligned}
		\norm{f}{C_v(\TT^2)}	
		& \ll 
		\int_{\TT^2} \big|\big(v D^{(1,1)} f\big)(\bx)\big| \rd \bx 
		+ \int_{\TT^2} \big|\big(v D^{(1,0)} f\big)(\bx)\big| |x_1|^{\tau - 1} \rd \bx
		\\
		&
		+\int_{\TT^2} \big|\big(v D^{(0,1)} f\big)(\bx)\big||x_2|^{\tau - 1}  \rd \bx  
		+\int_{\TT^2} \big|\big(v f\big)(\bx)\big||x_1x_2|^{\tau - 1}  \rd \bx
		\\ 
		&
		\ll\int_{\TT^2} \big|\big(w D^{(1,1)} f\big)(\bx)\big| \rd \bx 
		+ \int_{\TT^2} \big|\big(w D^{(1,0)} f\big)(\bx)\big| \rd \bx
		\\
		&
		+\int_{\TT^2} \big|\big(w D^{(0,1)} f\big)(\bx)\big| \rd \bx  
		+\int_{\TT^2} \big|\big(w f\big)(\bx)\big| \rd \bx = \norm{f}{W^1_{1,w}(\TT^2)}
	\end{aligned}
\end{equation}
From the continuous embbeding of $W^r_{1,w}(\TT^2)$ into $C_v(\TT^2)$ it follows that  
any function $f \in W^r_{1,w}(\TT^2)$ is equivalent  in sense of the Lebesgue measure to  a continuous (not necessarily bounded) function on $\TT^2$. Hence we obtain the claim of the lemma for $p = 1$ since $T$ has been taken arbitrarily and the restriction of  a function $f \in W^r_{1,w}(\RR^2)$ to $\TT^2$ belongs to $W^r_{1,w}(\TT^2)$.
\hfill
\end{proof}	

Importantly, as noticed in Introduction from Lemma \ref{lemma:continuity}   we can  assume that the functions $f \in W^r_{p,w}(\RRd)$ are  continuous. This allows to correctly define quadratures for them.

For $\bx \in \RRd$ and $e \subset \brab{1,...,d}$, let $\bx^e \in \RR^{|e|}$ be defined by $(x^e)_i := x_i$, and  $\bar{\bx}^e\in \RR^{d-|e|}$ by $(\bar{x}^e)_i := x_i$, $i \in \brab{1,...,d} \setminus e$. With an abuse we write 
$(\bx^e,\bar{\bx}^e) = \bx$.

\begin{lemma} \label{lemma:g(bx^e}
	Let $1\le p \le \infty$,  $e \subset \brab{1,...,d}$ and $\br \in \NNd_0$. Assume that $f$ is a function on $\RRd$  such that for every $\bk \le \br$, $D^\bk f \in L^p_w(\RRd)$. 
	Put for  $\bk \le \br$ and $\bar{\bx}^e \in \RR^{d-|e|}$,
	\begin{equation*}\label{g(bx^e}
	g(\bx^e): =  D^{\bar{\bk}^e} f(\bx^e,\bar{\bx}^e).
	\end{equation*}
Then $D^\bs g \in L^p_w(\RR^{|e|})$ for every $\bs \le \bk^{e}$ and almost every 
$\bar{\bx}^e \in \RR^{d-|e|}$.
\end{lemma}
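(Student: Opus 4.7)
The plan is to identify the weak derivative $D^\bs g$ in the $\bx^e$-variables with the restriction to a slice of a single full weak derivative of $f$, and then read off the integrability claim from Tonelli's theorem applied to the product weight.

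First I would set up the multi-index bookkeeping. Given $\bs$ with $\bs \le \bk^e$, let $\bt \in \NNd_0$ be the multi-index with $t_i = s_i$ for $i \in e$ and $t_i = k_i$ for $i \notin e$. Since $\bs \le \bk^e$ and the remaining components of $\bt$ are taken from $\bk$, we have $\bt \le \bk \le \br$, so by hypothesis $D^\bt f \in L^p_w(\RRd)$.

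Second, I would establish the slicing identity: for almost every $\bar{\bx}^e \in \RR^{d-|e|}$, the weak partial derivative of order $\bs$ of the function $\bx^e \mapsto g(\bx^e) = D^{\bar{\bk}^e} f(\bx^e, \bar{\bx}^e)$ exists on $\RR^{|e|}$ and coincides with $\bx^e \mapsto D^\bt f(\bx^e, \bar{\bx}^e)$. This is the standard Fubini statement for weak derivatives: testing $D^\bt f$ against product functions $\varphi(\bx^e)\psi(\bar{\bx}^e)$ with $\varphi \in C_0^\infty(\RR^{|e|})$ and $\psi \in C_0^\infty(\RR^{d-|e|})$, integrating by parts in the $\bx^e$-variables under Fubini, and using the density of finite linear combinations of such products in $C_0^\infty(\RRd)$, reduces the claim to the identity
\begin{equation*}
\int_{\RR^{|e|}} D^\bt f(\bx^e,\bar{\bx}^e)\,\varphi(\bx^e)\,\rd \bx^e = (-1)^{|\bs|_1}\int_{\RR^{|e|}} g(\bx^e)\,D^\bs \varphi(\bx^e)\,\rd \bx^e
\end{equation*}
for every $\varphi$ in a fixed countable dense family and for a.e. $\bar{\bx}^e$, which is exactly the required identification on that full-measure set of slices.

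Finally, the product structure $w(\bx) = w(\bx^e)\,w(\bar{\bx}^e)$ from \eqref{w(bx)} together with Tonelli yields, for $1 \le p < \infty$,
\begin{equation*}
\int_{\RR^{d-|e|}} |w(\bar{\bx}^e)|^p \left(\int_{\RR^{|e|}} |D^\bt f(\bx^e,\bar{\bx}^e)\,w(\bx^e)|^p\,\rd\bx^e\right) \rd\bar{\bx}^e = \|D^\bt f\|_{L^p_w(\RRd)}^p < \infty,
\end{equation*}
so the inner integral is finite for a.e. $\bar{\bx}^e$; the case $p = \infty$ follows immediately since if $|D^\bt f\cdot w| \le M$ a.e. on $\RRd$ then for a.e. $\bar{\bx}^e$ one has $|D^\bt f(\bx^e,\bar{\bx}^e) w(\bx^e)| \le M / w(\bar{\bx}^e) < \infty$ for a.e. $\bx^e$. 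Combined with the slicing identity of the previous step, this is exactly the assertion $D^\bs g \in L^p_w(\RR^{|e|})$ for a.e. $\bar{\bx}^e$. The main obstacle is the second step: the function $g$ is only defined up to a null set in $\bar{\bx}^e$, and one must verify carefully that taking the weak derivative $D^\bs$ in the first block of variables commutes with this a.e.-defined slicing; once this is in place, the lemma is a one-line consequence of Tonelli.
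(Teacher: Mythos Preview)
Your proposal is correct and follows essentially the same route as the paper: test against tensor products $\varphi^e(\bx^e)\bar{\varphi}^e(\bar{\bx}^e)$, use Fubini to identify $D^\bs g$ on a.e.\ slice with the restriction of $D^{\bar{\bk}^e+\bs}f$ (your $D^{\bt}f$), and then read off the $L^p_w$-integrability from the assumption on $f$. Your write-up is in fact slightly more careful than the paper's in two places: you note that one needs a fixed countable family of test functions $\varphi$ so that the exceptional null set in $\bar{\bx}^e$ is uniform, and you spell out the Tonelli step with the product weight (and the $p=\infty$ case) where the paper simply says ``since by the assumption $D^\bk f\in L^p_w(\RRd)$''.
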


\begin{proof}
Taking  arbitrary test functions $\varphi^e \in C^\infty_0(\RR^{|e|})$	and $\bar{\varphi}^e \in C^\infty_0(\RR^{d-|e|})$  and defining
$\varphi (\bx):= \varphi^e(\bx^e) \bar{\varphi}^e(\bar{\bx}^e)$,  we have that $\varphi \in C^\infty_0(\RRd)$.
For $\bk \le \br$ and $\bs \in \NNd_0$ with $s_i \le k_i$, $i \in e$ and $s_i := 0$ otherwise, we derive that
\begin{equation}\nonumber
	\begin{aligned}
		&\int_{\RR^{d-|e|}}\bar{\varphi}^e(\bar{\bx}^e) \int_{\RR^{|e|}}
		  g(\bx^e) D^\bs\varphi^e(\bx^e)\rd \bx^e \,\rd \bar{\bx}^e
		\\ & =
	\int_{\RR^{d-|e|}}\bar{\varphi}^e(\bar{\bx}^e) \int_{\RR^{|e|}}
	D^{\bar{\bk}^e} f(\bx^e,\bar{\bx}^e) D^\bs\varphi^e(\bx^e)\rd \bx^e \,\rd \bar{\bx}^e
	\\
	&=
		\int_{\RRd}
	D^{\bar{\bk}^e} f(\bx) D^\bs\varphi(\bx)\rd \bx
		= (-1)^{|\bs|_1}	\int_{\RRd}
		D^{\bar{\bk}^e + \bs} f(\bx) \varphi(\bx)\rd \bx
		\\
		&
		= 	\int_{\RR^{d-|e|}}\bar{\varphi}^e(\bar{\bx}^e) (-1)^{|\bs|_1}\int_{\RR^{|e|}}
		D^{\bar{\bk}^e +\bs} f(\bx^e,\bar{\bx}^e) \varphi^e(\bx^e)\rd \bx^e \,\rd \bar{\bx}^e.
	\end{aligned}
\end{equation}
Hence,
\begin{equation}\nonumber
	\begin{aligned}
	\int_{\RR^{|e|}}
		 g(\bx^e) D^\bs\varphi^e(\bx^e)\rd \bx^e 
		=
		 (-1)^{|\bs|_1}\int_{\RR^{|e|}}
		D^{\bar{\bk}^e +\bs} f(\bx^e,\bar{\bx}^e) \varphi^e(\bx^e)\rd \bx^e
	\end{aligned}
\end{equation}
for almost every $\bar{\bx}^e \in \RR^{d-|e|}$. 
 This means that the weak derivative $D^\bs g$ exists for almost every 
$\bar{\bx}^e \in \RR^{d-|e|}$ which coincides with $D^{\bar{\bk}^e + \bs} f(\cdot,\bar{\bx}^e)$. 
Moreover, $D^\bs g \in L^p_w(\RR^{|e|})$ for almost every $\bar{\bx}^e \in \RR^{d-|e|}$ since 
by the assumption $D^\bk f \in L^p_w(\RRd)$ for every $\bk \le \br$.
 \hfill
\end{proof}

	Assume that there exists a sequence of quadratures $\brac{Q_{2^k}}_{k \in \NN_0}$ with
\begin{equation}\label{Q_2^kf}
	Q_{2^k}f: = \sum_{s=1}^{2^k} \lambda_{k,s} f(x_{k,s}), \ \ \{x_{k,1},\ldots,x_{k,2^k}\}\subset \RR,
\end{equation}
such that 
\begin{equation}\label{IntErrorR}
	\bigg|\int_{\RR} f(x) w(x)\rd x - Q_{2^k}f\bigg| 
	\leq C 2^{- ak} \|f\|_{W^r_{1,w}(\RR)}, 
	\ \  \ k \in \NN_0,  \ \ f \in W^r_{1,w}(\RR), 
\end{equation}
for some  number $a>0$ and constant $C>0$.

Based on a sequence $\brac{Q_{2^k}}_{k \in \NN_0}$ of the form \eqref{Q_2^kf} satisfying \eqref{IntErrorR}, we construct quadratures on $\RRd$ by using the well-known Smolyak algorithm. We define for $k \in \NN_0$, the one-dimensional operators
\begin{equation*}\label{DeltaI}
\Delta_k^Q:= Q_{2^k} - Q_{2^{k-1}}, \  k >0, \ \ \Delta_0^Q:= Q_1,  
\end{equation*}
and 
\begin{equation*}\label{E}
	E_k^Qf:= \int_{\RR} f(x) w(x)\rd x - Q_{2^k}f.
\end{equation*}
For $\bk \in \NNd$, the $d$-dimensional operators $Q_{2^\bk}$,  $\Delta_\bk^Q$ and $E_\bk^Q$ are defined as the tensor  product of one-dimensional operators:
\begin{equation}\label{tensor-product}
	Q_{2^\bk}:= \bigotimes_{i=1}^d Q_{2^{k_i}} , \ \	
	\Delta_\bk^Q:= \bigotimes_{i=1}^d \Delta_{k_i}^Q, \ \ 	
	E_\bk^Q:= \bigotimes_{i=1}^d E_{k_i}^Q, 
\end{equation}
where $2^\bk:= (2^{k_1},\cdots, 2^{k_d})$ and 
the univariate operators $Q_{2^{k_j}}$, $\Delta_{k_j}^Q$ and $E_{k_j}^Q$ 
 are successively applied to the univariate functions $\bigotimes_{i<j} Q_{2^{k_i}}(f)$, $\bigotimes_{i<j} \Delta_{k_i}^Q(f)$ and $\bigotimes_{i<j} E_{k_i}^Q $, respectively, by considering them  as 
functions of  variable $x_j$ with the other variables held fixed. The operators $Q_{2^\bk}$, $\Delta_\bk^Q$ and $E_\bk^Q$ are well-defined for continuous functions on $\RRd$, in particular for ones from $W^r_{1,w}(\RRd)$.

Notice that if $f$ is a continuous function on $\RRd$, then  $Q_{2^\bk} f$ is a quadrature on $\RRd$ which is given by
\begin{equation}\label{Q_2^bkf}
	Q_{2^\bk}f = \sum_{\bs=\bone}^{2^\bk} \lambda_{\bk,\bs} f(\bx_{\bk,\bs}), 
	\ \ \{\bx_{\bk,\bs}\}_{\bone \le \bs \le 2^\bk}\subset \RRd,
\end{equation}
where $$
\bx_{\bk,\bs}:= \brac{x_{k_1,s_1},...,x_{k_d,s_d}}, \ \ \ 
\lambda_{\bk,\bs}:= \prod_{i=1}^d \lambda_{k_i,s_i} ,
$$
 and the summation $\sum_{\bs=\bone}^{2^\bk}$ means that the sum is taken over all $\bs$ such that $\bone \le \bs \le 2^\bk$. Hence we derive that 
\begin{equation}\label{Delta_bk}
\Delta_\bk^Q f =  \sum_{e \subset \brab{1,...,d}} (-1)^{d - |e|}Q_{2^{\bk(e)}} f
	= \sum_{e \subset \brab{1,...,d}} (-1)^{d - |e|}\sum_{\bs=\bone}^{2^{\bk(e)}} \lambda_{\bk(e),\bs} f(\bx_{\bk(e),\bs}), 
\end{equation}
where  $\bk(e) \in \NNd_0$ is defined by $k(e)_i = k_i$, $i \in e$, and 	$k(e)_i = \max(k_i-1,0)$, $i \not\in e$.  We also have
\begin{equation}\label{E_bk}
	E_\bk^Q f =  \sum_{e \subset \brab{1,...,d}} (-1)^{|e|} 
	\int_{\RR^{d - |e|}}Q_{2^{\bk^e}}f(\cdot,\bar{\bx^e} ) w(\bar{\bx}^e) \rd \bar{\bx}^e,  
\end{equation}
where $w(\bar{\bx}^e):= \prod_{j \not\in e} w(x_j)$.

Notice that as mappings from $C(\RRd)$ to $\RR$, the operators $Q_{2^\bk}$, $\Delta_\bk^Q$ and $E_\bk^Q$  possess commutative and associative properties  with respect to applying the component operators $Q_{2^{k_j}}$, $\Delta_{k_j}^Q$ and $E_{k_j}^Q$ in the following sense.  We have  for any $e \subset \brab{1,...,d}$,
$$
Q_{2^\bk}f= Q_{2^{\bk^e}}\brac{Q_{2^{\bar{\bk}^e}} f}, \ \ 
\Delta_\bk^Q f= \Delta_{\bk^e}^Q \brac{\Delta_{\bar{\bk}^e}^Q f}, \ \ 
E_\bk^Q f= E_{\bk^e}^Q \brac{E_{\bar{\bk}^e}^Q f},
$$
and for any reordered sequence $\brab{i(1),..., i(d)}$ of $\brab{1,...,d}$, 
\begin{equation}\label{commutative}
	Q_{2^\bk} = \bigotimes_{j=1}^d Q_{2^{k_{i(j)}}}, 
	\ \	\Delta_\bk^Q = \bigotimes_{j=1}^d \Delta_{k_{i(j)}}^Q, \ \ 	
	E_\bk^Q = \bigotimes_{j=1}^d E_{k_{i(j)}}^Q.
\end{equation}
  These properties directly follow from 
\eqref{Q_2^bkf}--\eqref{E_bk}.

\begin{lemma} \label{lemma:E_k}
	Under the assumption \eqref{Q_2^kf}--\eqref{IntErrorR}, we have
	\begin{equation*}\label{E_k}
		\big|E_\bk^Qf\big| 
		\leq C 2^{- a|\bk|_1} \|f\|_{W^r_{1,w}(\RRd)}, 
		\ \  \bk \in \NNd_0, \ \ f \in W^r_{1,w}(\RRd).
	\end{equation*}
\end{lemma}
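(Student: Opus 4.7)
The natural approach is induction on $d$, exploiting the tensor-product representation \eqref{tensor-product} together with the commutative/associative property \eqref{commutative}. The base case $d=1$ is precisely the hypothesis \eqref{IntErrorR}. For the inductive step, writing $\bar{\bk} := (k_2,\ldots,k_d)$ and using \eqref{commutative}, I would peel off one variable at a time by factoring
$$E_\bk^Q f = E_{k_1}^Q\bigl[E_{\bar{\bk}}^Q f(x_1,\cdot)\bigr],$$
where $g(x_1) := E_{\bar{\bk}}^Q[f(x_1,\cdot)]$ denotes the result of applying the $(d-1)$-dimensional error operator in the last $d-1$ variables to the section $f(x_1,\cdot)$.

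The core of the argument is to verify that $g \in W^r_{1,w}(\RR)$ with a norm bound gaining the factor $2^{-a|\bar{\bk}|_1}$. For this, I would first invoke Lemma~\ref{lemma:g(bx^e}, which guarantees that for every $k_1' \le r$ the weak derivative $D^{(k_1',0,\ldots,0)} f(x_1,\cdot)$ lies in $W^r_{1,w}(\RR^{d-1})$ for almost every $x_1$. Linearity of the quadrature operators then gives $g^{(k_1')}(x_1) = E_{\bar{\bk}}^Q\bigl[D^{(k_1',0,\ldots,0)} f(x_1,\cdot)\bigr]$, and the induction hypothesis yields
$$\bigl|g^{(k_1')}(x_1)\bigr| \le C\, 2^{-a|\bar{\bk}|_1} \bigl\|D^{(k_1',0,\ldots,0)} f(x_1,\cdot)\bigr\|_{W^r_{1,w}(\RR^{d-1})}.$$
Multiplying by $w(x_1)$, integrating, and noting that the resulting double integral is bounded by a sum of mixed $L^1_w(\RRd)$-norms of $D^\bs f$ with $|\bs|_\infty \le r$ (since the indices $(k_1',s_2,\ldots,s_d)$ appearing all satisfy $|\cdot|_\infty\le r$), I obtain
$$\|g^{(k_1')}\|_{L^1_w(\RR)} \le C\, 2^{-a|\bar{\bk}|_1} \|f\|_{W^r_{1,w}(\RRd)}.$$
Summing over $0 \le k_1' \le r$ gives $\|g\|_{W^r_{1,w}(\RR)} \le C\, 2^{-a|\bar{\bk}|_1} \|f\|_{W^r_{1,w}(\RRd)}$.

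Finally, applying the one-dimensional estimate \eqref{IntErrorR} to $g$ yields
$$|E_\bk^Q f| = |E_{k_1}^Q g| \le C\, 2^{-a k_1} \|g\|_{W^r_{1,w}(\RR)} \le C\, 2^{-a(k_1 + |\bar{\bk}|_1)} \|f\|_{W^r_{1,w}(\RRd)} = C\, 2^{-a|\bk|_1} \|f\|_{W^r_{1,w}(\RRd)},$$
completing the induction. The main technical obstacle is the measurability/regularity bookkeeping: one must justify that $g$ is defined pointwise as an honest continuous function of $x_1$, that its weak derivatives up to order $r$ coincide with the expected expression $E_{\bar{\bk}}^Q[D^{(k_1',0,\ldots,0)} f(x_1,\cdot)]$, and that Fubini-type interchanges of integration and application of the finite linear functional $Q_{2^{k_1}}$ are valid. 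These points are exactly what Lemmas~\ref{lemma:continuity} and~\ref{lemma:g(bx^e} are designed to handle, so the proof reduces to straightforward estimates once those are cited.
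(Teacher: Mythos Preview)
Your proposal is correct and follows essentially the same approach as the paper: induction on $d$, peeling off one variable via $E_\bk^Q f = E_{k_1}^Q\bigl(E_{\bar{\bk}}^Q f\bigr)$, applying the one-dimensional estimate \eqref{IntErrorR} to the outer operator, and then bounding $\|E_{\bar{\bk}}^Q f\|_{W^r_{1,w}(\RR)}$ by the induction hypothesis combined with Lemmas~\ref{lemma:continuity} and~\ref{lemma:g(bx^e}. The paper presents only the case $d=2$ explicitly (leaving the general induction to the reader) and writes the key intermediate bound more compactly as $\|E_{k_2}^Q f\|_{W^r_{1,w}(\RR)} \le 2^{-ak_2}\bigl\|\,\|f(x_1,\cdot)\|_{W^r_{1,w}(\RR)}\,\bigr\|_{W^r_{1,w}(\RR)}$, but your more detailed unpacking of how $g^{(k_1')}$ arises and why the resulting norms recombine into $\|f\|_{W^r_{1,w}(\RRd)}$ is exactly what underlies that line.
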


\begin{proof} The case $d=1$ of the lemma is as in \eqref{IntErrorR} by the assumption. For simplicity we prove the lemma for the case $d=2$. The general case can be proven in the same way by induction on $d$. We make use of the temporary notation:
$$
%\norm{f}{W^r_{1,w}(\RR), 1}(x_2):= \norm{f(\cdot,x_2)}{W^r_{1,w}(\RR)}, \quad
\norm{f}{W^r_{1,w}(\RR), 2}(x_1):= \norm{f(x_1,\cdot)}{W^r_{1,w}(\RR)}. 
$$ 
From Lemmata \ref{lemma:continuity} and  \ref{lemma:g(bx^e} it follows that 	
$f(\cdot,x_2) \in W^r_{1,w}(\RR)$ for every $x_2 \in \RR$. Notice that $E_{k_2}^Qf$ is a function in the variable $x_1$ only. Hence, by \eqref{IntErrorR}
 we obtain  that
	\begin{equation}\nonumber
		\begin{aligned}
			\big|E_{(k_1,k_2)}^Qf\big| & =
		\big|E_{k_1}^Q (E_{k_2}^Qf)\big| \leq C 2^{- ak_1} \|E_{k_2}^Qf\|_{W^r_{1,w}(\RR)}
		\\
			&\leq C 2^{- ak_1} \|2^{- ak_2} \|f\|_{W^r_{1,w}(\RR),2}(\cdot)\|_{W^r_{1,w}(\RR)}
		= C 2^{- a|\bk|_1} \|f\|_{W^r_{1,w}(\RR^2)}.
	\end{aligned}
	\end{equation}
	\hfill
\end{proof}

We say that $\bk \to \infty$, $\bk \in \NNd_0$, if and only if $k_i \to \infty$ for every $i = 1,...,d$.
\begin{lemma} \label{lemma:Delta_k}
	Under the assumption \eqref{Q_2^kf}--\eqref{IntErrorR}, we have that
	for every $ f \in W^r_{1,w}(\RRd)$,
		\begin{equation}\label{Series1}
	\int_{\RRd} f(\bx) w(\bx)\rd \bx
	= 	\sum_{\bk \in \NNd_0}\Delta_\bk^Qf 
	\end{equation}
with absolute convergence of the series, and
	\begin{equation}\label{Delta_k}
		\big|\Delta_\bk^Qf\big| 
		\leq C 2^{- a|\bk|_1} \|f\|_{W^r_{1,w}(\RRd)}, 
		\ \  \bk \in \NNd_0.
	\end{equation}
\end{lemma}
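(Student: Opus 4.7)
The plan is to establish the pointwise bound \eqref{Delta_k} first by induction on $d$ from the one-dimensional Lemma \ref{lemma:E_k}, and then deduce the series representation \eqref{Series1} via a tensor-product telescoping argument.

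For \eqref{Delta_k}, introduce the convention $E_{-1}^Q f := \int_\RR f(x) w(x)\,\rd x$, so that $\Delta_k^Q = E_{k-1}^Q - E_k^Q$ holds for every $k \ge 0$ (the case $k=0$ being $\Delta_0^Q = Q_1 = E_{-1}^Q f - E_0^Q f$). For $d=1$ the bound then follows from Lemma \ref{lemma:E_k} together with $|\int_\RR fw\,\rd x| \le \|f\|_{L^1_w(\RR)} \le \|f\|_{W^r_{1,w}(\RR)}$. For the inductive step $d-1 \to d$, use commutativity \eqref{commutative} to write $\Delta_\bk^Q f = \Delta_{(k_2,\ldots,k_d)}^Q g$ where $g(\bar{\bx}^{\{1\}}) := \Delta_{k_1}^Q f(\cdot,\bar{\bx}^{\{1\}})$. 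By Lemmas \ref{lemma:continuity} and \ref{lemma:g(bx^e}, the slice $f(\cdot,\bar{\bx}^{\{1\}})$ lies in $W^r_{1,w}(\RR)$ for almost every $\bar{\bx}^{\{1\}}$, and the one-dimensional bound yields $|g(\bar{\bx}^{\{1\}})| \le C 2^{-ak_1}\|f(\cdot,\bar{\bx}^{\{1\}})\|_{W^r_{1,w}(\RR)}$. Since $\Delta_{k_1}^Q$ is a finite linear combination of evaluations in $x_1$ with constant coefficients, it commutes with the weak derivatives $D^{\bar{\ba}^{\{1\}}}$ in the remaining coordinates; applying the same one-dimensional bound to $D^{\bar{\ba}^{\{1\}}} f$ for every $\bar{\ba}^{\{1\}}$ with $|\bar{\ba}^{\{1\}}|_\infty \le r$ and integrating against $w(\bar{\bx}^{\{1\}})\,\rd\bar{\bx}^{\{1\}}$ gives, by Fubini,
\[
\|g\|_{W^r_{1,w}(\RR^{d-1})} \le C' 2^{-ak_1}\|f\|_{W^r_{1,w}(\RRd)}.
\]
The inductive hypothesis applied to $g$ then closes the estimate.

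For \eqref{Series1}, the telescoping identity $\sum_{\bk \le \bN}\Delta_\bk^Q = \bigotimes_{i=1}^d Q_{2^{N_i}} = Q_{2^\bN}$ shows that the partial sums equal $Q_{2^\bN} f$, and absolute convergence of the full series over $\bk \in \NNd_0$ is immediate from \eqref{Delta_k} since $\sum_\bk 2^{-a|\bk|_1} < \infty$. To identify the limit with $\int_{\RRd} fw\,\rd\bx$, I would expand the difference $\int_{\RRd} fw\,\rd\bx - Q_{2^\bN} f$ as a sum of $d$ tensor-product terms via the standard identity $\bigotimes_i A_i - \bigotimes_i B_i = \sum_i (\bigotimes_{j<i} B_j)\otimes(A_i-B_i)\otimes(\bigotimes_{j>i}A_j)$ with $A_j$ the one-dimensional integration operator and $B_j = Q_{2^{N_j}}$. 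Each of the $d$ terms contains exactly one factor $E_{N_i}^Q$ (contributing decay $2^{-aN_i}$), while the remaining factors are integration operators or $Q_{2^{N_j}}$ operators, the latter being uniformly bounded on $W^r_{1,w}(\RR)$ by \eqref{IntErrorR}. The same slicing and Fubini argument used above then bounds each term by $C\,2^{-aN_i}\|f\|_{W^r_{1,w}(\RRd)}$, which vanishes as $\bN\to\infty$.

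The main obstacle is the measure-theoretic bookkeeping in the inductive step: one must verify that Lemma \ref{lemma:g(bx^e} supplies exactly the slice regularity required for the one-dimensional estimate to be applied under the integral in the remaining coordinates, and that the interchange of $\Delta_{k_1}^Q$ with the weak derivatives in those coordinates is legitimate. Once the Fubini-type bound on $\|g\|_{W^r_{1,w}(\RR^{d-1})}$ is secured, the rest of the proof proceeds routinely by induction.
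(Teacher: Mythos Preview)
Your argument is correct, but it takes a longer route than the paper's.

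For \eqref{Delta_k}, the paper does not run a separate induction on $d$. Instead it uses the algebraic identity
\[
\Delta_\bk^Q \;=\; \sum_{e \subset \{1,\ldots,d\}} (-1)^{|e|}\, E_{\bk(e)}^Q,
\]
where $\bk(e)_i = k_i$ for $i \in e$ and $\bk(e)_i = \max(k_i-1,0)$ otherwise, and then applies the already-proven $d$-dimensional Lemma~\ref{lemma:E_k} to each of the $2^d$ terms. Since $|\bk(e)|_1 \ge |\bk|_1 - d$, every term is bounded by $C\,2^{-a|\bk|_1}\|f\|_{W^r_{1,w}(\RRd)}$, and \eqref{Delta_k} follows in one line. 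Your inductive argument is valid but essentially re-runs the proof of Lemma~\ref{lemma:E_k} for the operators $\Delta_\bk^Q$ rather than leveraging that lemma as a black box; the measure-theoretic slicing issues you flag are exactly those already confronted (via Lemmas~\ref{lemma:continuity} and~\ref{lemma:g(bx^e}) in the proof of Lemma~\ref{lemma:E_k}, so the inclusion--exclusion route avoids repeating that work.

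For \eqref{Series1}, the paper likewise uses a signed-sum identity,
\[
\int_{\RRd} f\,w\,\rd\bx \;-\; Q_{2^\bk} f \;=\; \sum_{\varnothing \ne e \subset \{1,\ldots,d\}} (-1)^{|e|}\, E_{\bk^e}^Q f,
\]
and bounds the right-hand side via Lemma~\ref{lemma:E_k} by $C\max_i 2^{-ak_i}\|f\|_{W^r_{1,w}(\RRd)}$, which tends to $0$ as $\bk\to\infty$. Your $d$-term telescoping $\bigotimes A_i - \bigotimes B_i = \sum_i(\bigotimes_{j<i}B_j)\otimes(A_i-B_i)\otimes(\bigotimes_{j>i}A_j)$ works equally well and produces fewer terms, at the small cost of having mixed operators (quadrature, error, integration) in each summand, which in turn requires the uniform boundedness of $Q_{2^{N_j}}$ on $W^r_{1,w}(\RR)$ that you correctly extract from \eqref{IntErrorR}. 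Both approaches are sound; the paper's is shorter because it reuses Lemma~\ref{lemma:E_k} wholesale rather than rebuilding its inductive machinery.
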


\begin{proof}  The operator $\Delta_\bk^Q$ can be represented in the form
	\begin{equation}\nonumber
\Delta_\bk^Q
	= 	\sum_{e \subset \brab{1,...,d}} (-1)^{|e|} E_{\bk(e)}^Q.
\end{equation}	
Therefore, by using Lemma \ref{lemma:E_k} we derive that for every $f \in W^r_{1,w}(\RRd)$ and  $\bk \in \NNd_0$,
	\begin{equation}\nonumber
		\begin{aligned}
			\big|\Delta_\bk^Qf\big| 
			& \le
			\sum_{e \subset \brab{1,...,d}} \big|E_{\bk(e)}^Qf\big| 
			\\
			&\leq 
			\sum_{e \subset \brab{1,...,d}}  C 2^{- a|\bk(e)|_1} \|f\|_{W^r_{1,w}(\RRd)}
			\le C 2^{- a|\bk|_1} \|f\|_{W^r_{1,w}(\RRd)}
		\end{aligned}
	\end{equation}
which proves \eqref{Delta_k} and hence  the absolute convergence of the series in \eqref{Series1} follows.
 Notice that 
\begin{equation}\nonumber
	\int_{\RRd} f(\bx) w(\bx)\rd \bx - Q_{2^\bk}f
	= 	\sum_{e \subset \brab{1,...,d}, \ e \not= \varnothing} (-1)^{|e|} E_{\bk^e}^Qf,
\end{equation}	
where recall $\bk^e \in \NNd_0$ is defined by $k^e_i = k_i$, $i \in e$, and 	$k^e_i = 0$, $i \not\in e$.
By using Lemma~\ref{lemma:E_k} we derive  for $\bk \in \NNd_0$ and $f \in W^r_{1,w}(\RRd)$,
\begin{equation}\nonumber
	\begin{aligned}
		\bigg|\int_{\RRd} f(\bx) w(\bx)\rd \bx  -  Q_{2^\bk}f\bigg| 
		& \le
		\sum_{e \subset \brab{1,...,d}, \ e \not= \varnothing}\big|E_{\bk^e}^Qf\big| 
		\\
		&\leq 
		C \max_{e \subset \brab{1,...,d}, \ e \not= \varnothing} \ \max_{1 \le i \le d} 2^{- a|k^e_i|} \|f\|_{W^r_{1,w}(\RRd)}
	\\
	&\leq 
	C \max_{1 \le i \le d} 2^{- a|k_i|} \|f\|_{W^r_{1,w}(\RRd)},
	\end{aligned}
\end{equation}
which is going to $0$ when $\bk \to \infty$. This together with the obvious equality
	\begin{equation}\nonumber
Q_{2^\bk}
	= 	\sum_{\bs \le \bk} \Delta_{\bs}^Q
\end{equation}	
proves  \eqref{Series1}.
	\hfill
\end{proof}

We now define an  algorithm for quadrature on sparse grids adopted from the alogorithm for sampling recovery initiated by Smolyak (for detail see \cite[Sections 4.2 and 5.3]{DTU18B}). For $\xi > 0$, we define the operator
	\begin{equation}\nonumber
	Q_\xi
	:= 	\sum_{|\bk|_1 \le \xi } \Delta_{\bk}^Q.
\end{equation}	
From \eqref{Delta_bk} we can see that $Q_\xi$ is a quadrature on $\RRd$ of the form  \eqref{Q_nf-introduction}:
\begin{equation}\label{Q_xi}
	Q_\xi f
	= 	\sum_{|\bk|_1 \le \xi } \ \sum_{e \subset \brab{1,...,d}} (-1)^{d - |e|}\ \sum_{\bs=\bone}^{2^{\bk(e)}} \lambda_{\bk(e),\bs} f(\bx_{\bk(e),\bs})
	= \sum_{(\bk,e,\bs) \in G(\xi)} \lambda_{\bk,e,\bs} f(\bx_{\bk,e,\bs}), 
\end{equation}
where 
$$
\bx_{\bk,e,\bs}:= \bx_{\bk(e),\bs}, \quad \lambda_{\bk,e,\bs}:= (-1)^{d - |e|}\lambda_{\bk(e),\bs}
$$ 
and 
\begin{equation}\nonumber
G(\xi)	:= \brab{(\bk,e,\bs): \ |\bk|_1 \le \xi, \,   e \subset \brab{1,...,d}, \,  \bone \le \bs \le \bk(e)}
\end{equation}
is a finite set.
The set of integration nodes in this quadrature 
\begin{equation}\nonumber
H(\xi):=\brab{\bx_{\bk,e,\bs}}_{(\bk,e,\bs) \in G(\xi)}
\end{equation}
is a step hyperbolic cross   in the function domain $\RRd$. 
The number of integration nodes in the quadrature $Q_\xi$ is 
\begin{equation}\nonumber
	|G(\xi)|
	= 	\sum_{|\bk|_1 \le \xi } \ \sum_{e \subset \brab{1,...,d}}2^{|\bk(e)|_1}
\end{equation}
which can be estimated as 
\begin{equation}\label{|G(xi)|}
	|G(\xi)|
	\asymp	\sum_{|\bk|_1 \le \xi }2^{|\bk|_1} \ \asymp \ 2^\xi \xi^{d - 1}, \ \ \xi \ge 1.
\end{equation}
As commented in Introduction, this quadrature plays a crucial role in the proof of the upper bound in the main results of the present paper \eqref{Int_n(W^r_1)}.

\begin{lemma} \label{lemma:Q_xi-error}
	Under the assumption \eqref{Q_2^kf}--\eqref{IntErrorR}, we have that
\begin{equation}\label{upperbound}
	\bigg|\int_{\RRd} f(\bx) w(\bx)\rd \bx  -  Q_\xi f\bigg| 
	\leq C 2^{- a\xi} \xi^{d - 1} \|f\|_{W^r_{1,w}(\RRd)}, 
	\ \ \xi \ge 1, \ \ f \in W^r_{1,w}(\RRd). 
\end{equation}
\end{lemma}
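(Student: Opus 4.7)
The plan is to view $Q_\xi f$ as a truncation of the telescoping series from Lemma \ref{lemma:Delta_k} and estimate the tail using the per-level bound \eqref{Delta_k}.

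First, I would combine the definition $Q_\xi f = \sum_{|\bk|_1 \le \xi} \Delta_\bk^Q f$ with the representation \eqref{Series1}, which by Lemma \ref{lemma:Delta_k} converges absolutely. Since the series is absolutely convergent, rearrangement is legitimate, and I may write
\begin{equation*}
\int_{\RRd} f(\bx) w(\bx)\rd \bx - Q_\xi f \;=\; \sum_{\bk \in \NNd_0,\, |\bk|_1 > \xi} \Delta_\bk^Q f.
\end{equation*}

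Next, I would take absolute values and insert the bound \eqref{Delta_k} from Lemma \ref{lemma:Delta_k}, obtaining
\begin{equation*}
\bigg|\int_{\RRd} f(\bx) w(\bx)\rd \bx - Q_\xi f\bigg| \;\le\; C \|f\|_{W^r_{1,w}(\RRd)} \sum_{|\bk|_1 > \xi} 2^{-a|\bk|_1}.
\end{equation*}
Grouping by level $m = |\bk|_1$, the number of $\bk \in \NNd_0$ with $|\bk|_1 = m$ is $\binom{m+d-1}{d-1} \ll (m+1)^{d-1}$, so
\begin{equation*}
\sum_{|\bk|_1 > \xi} 2^{-a|\bk|_1} \;\ll\; \sum_{m > \lfloor \xi \rfloor} (m+1)^{d-1} 2^{-a m}.
\end{equation*}

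The remaining step is the elementary geometric-type estimate $\sum_{m > \lfloor\xi\rfloor} (m+1)^{d-1} 2^{-am} \ll \xi^{d-1} 2^{-a\xi}$ for $\xi \ge 1$, obtained by factoring out the leading term $\lfloor\xi\rfloor^{d-1} 2^{-a\lfloor\xi\rfloor}$ and comparing the remaining series with a convergent geometric series. Combining everything yields \eqref{upperbound}. I do not anticipate any serious obstacle here: the main work has already been done in Lemmata \ref{lemma:E_k} and \ref{lemma:Delta_k}; the present statement is essentially a tail estimate for the Smolyak sum, where the $\xi^{d-1}$ factor is the classical price of the hyperbolic-cross truncation and simply reflects the number of multi-indices on the critical level.
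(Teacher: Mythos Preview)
Your proposal is correct and follows essentially the same route as the paper: write the error as the tail $\sum_{|\bk|_1>\xi}\Delta_\bk^Q f$ of the absolutely convergent series from Lemma~\ref{lemma:Delta_k}, apply \eqref{Delta_k} termwise, and bound $\sum_{|\bk|_1>\xi}2^{-a|\bk|_1}\ll 2^{-a\xi}\xi^{d-1}$. The paper simply asserts this last estimate, whereas you spell out the level-counting; otherwise the arguments coincide.
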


\begin{proof}  From  Lemma \ref{lemma:Delta_k} we derive that for $\xi \ge 1$ and $f \in W^r_{1,w}(\RRd)$,
	\begin{equation}\nonumber
		\begin{aligned}
	\bigg|\int_{\RRd} f(\bx) w(\bx)\rd \bx  -  Q_\xi f\bigg| 
			& \le
				\sum_{|\bk|_1 > \xi } \big|\Delta_\bk^Qf\big| 
				\le 	C\sum_{|\bk|_1 > \xi } 2^{-a|\bk|_1}  \|f\|_{W^r_{1,w}(\RRd)}
		\\
		&\leq 
	C \|f\|_{W^r_{1,w}(\RRd)} \sum_{|\bk|_1 > \xi } 2^{-a|\bk|_1} 
	\leq C 2^{- a\xi} \xi^{d - 1} \|f\|_{W^r_{1,w}(\RRd)}.
		\end{aligned}
	\end{equation}
	\hfill
\end{proof}
\begin{remark} \label{remark1}
	{\rm	
		From Theorem \ref{thm:Q_n-d=1} we can see that the truncated Gaussian quadratures $Q^{\rm{TG}}_{2 j(m)}$ 
		form  a sequence $\brac{Q_{2^k}}_{k \in \NN_0}$ of the form \eqref{Q_2^kf} satisfying  \eqref{IntErrorR} with $a = r_\lambda$. 
	}
\end{remark}

\begin{remark}
	{\rm	
The technique for proving the upper bound \eqref{upperbound} is analogous to a general technique  for establishing upper bounds of the error of unweighted sampling recovery by Smolyak algorithms of functions having mixed smoothness  on a bounded domain (see, e.g., \cite[Section 5.3]{DTU18B} and \cite[Section 6.9]{TB18} for detail).		
	}
\end{remark}

\begin{theorem} \label{theorem:Int_n(W)}
	We have that
	\begin{equation}\label{Int_n(W)}
	n^{-r_\lambda} (\log n)^{r_\lambda(d-1)} 
	\ll	
	\Int_n(\bW^r_{1,w}(\RRd)) 
	\ll 
	n^{-r_\lambda} (\log n)^{(r_\lambda + 1)(d-1)}.
	\end{equation}
\end{theorem}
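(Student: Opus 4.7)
The plan is to prove the two inequalities in \eqref{Int_n(W)} separately. The upper bound is essentially a corollary of the apparatus already assembled (Lemma \ref{lemma:Q_xi-error} plus Remark \ref{remark1}), while the lower bound requires a genuinely multidimensional fooling‑function construction built from the one‑dimensional estimates in the proof of Theorem \ref{thm:Q_n-d=1}.

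For the upper bound I would apply Remark \ref{remark1}: the truncated Gaussian quadratures $Q^{\rm TG}_{2j(m)}$ supply a sequence $(Q_{2^k})_{k\in\NN_0}$ meeting hypothesis \eqref{IntErrorR} of Lemma \ref{lemma:Q_xi-error} with $a=r_\lambda$. Hence the sparse‑grid quadrature $Q_\xi$ of \eqref{Q_xi} satisfies
\[
\sup_{f\in\bW^r_{1,w}(\RRd)}\Bigl|\int_{\RRd}f(\bx)w(\bx)\,\rd\bx-Q_\xi f\Bigr|\le C\,2^{-r_\lambda\xi}\xi^{d-1},
\]
and uses $|G(\xi)|\asymp 2^\xi\xi^{d-1}$ nodes by \eqref{|G(xi)|}. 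Given $n$, I take the largest $\xi$ with $|G(\xi)|\le n$, which yields $\xi\asymp\log n$ and $2^\xi\asymp n/(\log n)^{d-1}$. Substituting, the error is
\[
C\,2^{-r_\lambda\xi}\xi^{d-1}\asymp n^{-r_\lambda}\xi^{r_\lambda(d-1)}\cdot\xi^{d-1}\asymp n^{-r_\lambda}(\log n)^{(r_\lambda+1)(d-1)},
\]
which is the claimed upper bound.

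For the lower bound I would invoke Lemma \ref{lemma:Int_n>}: it suffices, for an arbitrary node set $\{\bxi_1,\ldots,\bxi_n\}\subset\RRd$, to exhibit $f\in\bW^r_{1,w}(\RRd)$ vanishing at the $\bxi_j$ with $|\int fw|\gg n^{-r_\lambda}(\log n)^{r_\lambda(d-1)}$. The building block is the univariate bump $h=\varphi_\delta\, w^{-1}$ of Theorem \ref{thm:Q_n-d=1}, for which the derivative expansion \eqref{h^(s)w(x)}, the structural formula \eqref{w^(s)} for $(w^{-1})^{(s)}$, and the exponent ordering \eqref{lambda_{s,s}} give sharp control of $\|D^{k}(\varphi_\delta w^{-1})\,w\|_{L^1(\RR)}$ when $\varphi_\delta$ is supported near $|x|\asymp n^{1/\lambda}$. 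I would then set up a step‑hyperbolic‑cross pigeonhole: fix $T\asymp n^{1/\lambda}$ and, for each $\bk\in\NNd_0$ with $|\bk|_1=\xi$, partition the tail box $[T,2T]^d$ into the $2^\xi$ anisotropic cells of widths $(T\,2^{-k_1},\ldots,T\,2^{-k_d})$. Choose $\xi$ so that $2^\xi\xi^{d-1}\gg n$; then, because each $\bxi_j$ lies in exactly $\binom{\xi+d-1}{d-1}\asymp\xi^{d-1}$ cells (one per $\bk$), a counting argument produces many empty cells, and one can select a specific $\bk^*$ and empty cell $B^*$ on which to anchor a product bump $f(\bx)=C^{-1}\prod_{i=1}^{d}\varphi\bigl(\delta_i^{-1}(x_i-t_i^*)\bigr)w^{-1}(\bx)$, normalised via the one‑dimensional computation of Theorem \ref{thm:Q_n-d=1}.

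The hard part is the calibration of the bump and the choice of $\xi$. A naive isotropic tensor bump (ignoring the hyperbolic‑cross multiplicity) already gives only the weaker rate $n^{-dr_\lambda}$, so the $\xi^{r_\lambda(d-1)}$ gain has to be extracted by combining the step‑hyperbolic‑cross counting with an anisotropic choice of $(\delta_1,\ldots,\delta_d)$ together with the 1D tail calculations \eqref{h^(s)w(x)}–\eqref{lambda_{s,s}} performed separately in each coordinate. Tracking these pieces so that the $\xi^{r_\lambda(d-1)}$ factor survives the tensor computation—while simultaneously ensuring that the normalised function stays in $\bW^r_{1,w}(\RRd)$ and vanishes at every node—is the main technical obstacle, and is the step that distinguishes the argument from the direct 1D proof of Theorem \ref{thm:Q_n-d=1}.
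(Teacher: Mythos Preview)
Your upper-bound argument is correct and coincides with the paper's: Remark~\ref{remark1} feeds Lemma~\ref{lemma:Q_xi-error} with $a=r_\lambda$, and the calibration $2^{\xi_n}\xi_n^{d-1}\asymp n$ yields the stated rate.

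Your lower-bound plan, however, has a structural gap. For a tensor bump $h(\bx)=\prod_i\varphi_{\delta_i}(x_i-t_i)\,w^{-1}(x_i)$ supported where $|x_i|\asymp X_i$ and $X_i^{\lambda-1}\delta_i\ge 1$ (the regime in which the top exponent in \eqref{lambda_{s,s}} dominates), the one-dimensional computation of Theorem~\ref{thm:Q_n-d=1} gives
\[
\frac{\displaystyle\int_{\RRd} h\,w}{\|h\|_{W^r_{1,w}(\RRd)}}
\ \asymp\ \prod_{i=1}^d X_i^{-(\lambda-1)r},
\]
which is \emph{independent of the widths} $\delta_i$. Anisotropy in the $\delta_i$'s therefore cannot improve the rate: once you confine all cells to $[T,2T]^d$ with $T\asymp n^{1/\lambda}$, every $X_i\asymp n^{1/\lambda}$ and the ratio is pinned at $n^{-dr_\lambda}$, exactly the ``naive'' bound you acknowledge. (Pushing some $\delta_i$ below $X_i^{-(\lambda-1)}$ only makes the ratio smaller.) Your pigeonhole also does not produce the $\xi^{d-1}$ saving: each of the $n$ nodes lies in one cell per $\bk$, so it occupies $\asymp\xi^{d-1}$ cells, and the multiplicity cancels---an empty cell is guaranteed only when $2^\xi>n$, not when $2^\xi\xi^{d-1}>n$.

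The paper's construction differs precisely here: the cells $K_{\bs}$ are all \emph{isotropic} of common side $\delta=M_n^{(1/\lambda-1)/d}$, but their positions $\delta\bs$ with $\bs\in\Gamma_d(M_n):=\{\bs\in\NNd:\ \prod_i s_i\le 2M_n,\ s_i\ge M_n^{1/d}\}$ range over a hyperbolic set in \emph{position} space. This forces $\prod_i X_i\asymp \delta^d\prod_i s_i\ll M_n^{1/\lambda}$, so the ratio above is $\gg M_n^{-r_\lambda}$. Since the $K_{\bs}$ are pairwise disjoint and $|\Gamma_d(M_n)|\asymp M_n(\log M_n)^{d-1}$, choosing $M_n$ minimal with $|\Gamma_d(M_n)|>n$ gives $M_n^{-1}\asymp n^{-1}(\log n)^{d-1}$ and hence the lower bound $n^{-r_\lambda}(\log n)^{r_\lambda(d-1)}$. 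The lower cutoff $s_i\ge M_n^{1/d}$ is exactly what guarantees $X_i^{\lambda-1}\delta\ge1$, so that the derivative estimate \eqref{lambda_{s,s}} is valid in each coordinate. In short: the logarithmic gain lives in the hyperbolic \emph{placement} of the bump, not in anisotropic widths.
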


\begin{proof}  
Let us first prove the upper bound in \eqref{Int_n(W)}. We  will construct a quadrature of the form \eqref{Q_xi} which realizes it.	
In order to do this, we take the truncated Gaussian quadrature 	$Q^{\rm{TG}}_{2 j(m)}f$ defined in \eqref{G-Q_mf}. For every $k \in \NN_0$, let $m_k$ be the largest number such that $2j(m_k) \le 2^k$.  Then we have 
$2j(m_k) \asymp 2^k$. 
For the sequence of quadratures  $\brac{Q_{2^k}}_{k \in \NN_0}$ with 
$$
Q_{2^k}:= Q^{\rm{TG}}_{2 j(m_k)} \in \Qq_{2^k},
$$
 from Theorem \ref{thm:Q_n-d=1} it follows that
\begin{equation*}\label{IntErrorR2}
	\bigg|\int_{\RR} f(x) w(x)\rd x - Q_{2^k}f\bigg| 
	\leq C 2^{- r_\lambda k} \|f\|_{W^r_{1,w}(\RR)}, 
	\ \  \ k \in \NN_0,  \ \ f \in W^r_{1,w}(\RR), 
\end{equation*}
This means that the assumption \eqref{Q_2^kf}--\eqref{IntErrorR} holds for $a = r_\lambda$. To prove the upper bound in \eqref{Int_n(W)} we approximate the integral 
$$
\int_{\RRd} f(\bx) w(\bx)\rd \bx
$$
  by the quadrature $Q_{\xi}$ which is formed from the sequence $\brac{Q_{2^k}}_{k \in \NN_0}$.
For every $n \in \NN$, let $\xi_n$ be the largest number such that $|G(\xi_n)| \le n$.  Then the corresponding operator $Q_{\xi_n}$ defines a quadrature belonging to $\Qq_n$. From \eqref{|G(xi)|} it follows 
$$
\ 2^{\xi_n} \xi_n^{d - 1} \asymp |G(\xi_n)|  \asymp n.
$$ 
Hence we deduce the asymptotic equivalences
$$
\ 2^{-\xi_n}  \asymp n^{-1} (\log n)^{d-1}, \ \  \xi_n \asymp \log n,
$$
which together with Lemma \ref{lemma:Q_xi-error}  yield that
	\begin{equation*}\label{Q_xi-error}
		\begin{aligned}
		\Int_n(\bW^r_{1,w}(\RRd)) 
		&\le 
	\sup_{f\in \bW^r_{1,w}(\RRd)}	
	\bigg|\int_{\RRd} f(\bx) w(\bx)\rd \bx  -  Q_{\xi_n}f\bigg| 
		\\
		&
		\leq 
		C 2^{- r_\lambda \xi_n} \xi_n^{d - 1}
		\asymp  	n^{-r_\lambda} (\log n)^{(r_\lambda + 1)(d-1)}.
		\end{aligned}
	\end{equation*}
The upper bound in \eqref{Int_n(W)} is proven.

We now prove the lower bound  in \eqref{Int_n(W)} by using the inequality \eqref{Int_n>} in Lemma \ref{lemma:Int_n>}. For $M \ge 1$, we define the set 
	\begin{equation*}\label{Gamma-set}
\Gamma_d(M):= \brab{\bs \in \NNd: \, \prod_{i=1}^d s_i \le 2M, \ s_i \ge M^{1/d}, \ i=1,...,d}.
\end{equation*}
Then we have 
	\begin{equation}\label{|Gamma|}
	|\Gamma_d(M)| \asymp M (\log M)^{d-1}, \ \ M>1.
\end{equation}
Indeed, we have the inclusion
$$\Gamma_d(M) \subset \Gamma'_d(M):= \brab{\bs \in \NNd: \, \prod_{i=1}^d s_i \le  2M}$$ and 
$$
|\Gamma'_d(M)| \asymp M (\log M)^{d-1}.
$$ 
Hence, 
$|\Gamma_d(M)| \ll M (\log M)^{d-1}.$ 
We prove the converse inequality 
$
|\Gamma_d(M)| \gg M (\log M)^{d-1}
$
by induction on the dimension $d$. It is obvious for $d = 1$. Assuming that this inequality  is true for $d-1$,  we check 
it for $d$, $(d \ge 2)$. Fix a positive number $\tau$ with $1 < \tau < d$. We have by induction assumption,
\begin{equation}\nonumber
	\begin{aligned}
			|\Gamma_d(M)|
		& = \sum_{M^{1/d} \le s_d \le 2M}    |\Gamma_{d-1}(2Ms_d^{-1})|
		\gg \sum_{M^{1/d} \le s_d \le 2M}  
		\brac{2Ms_d^{-1}} \brac{\log  2Ms_d^{-1}}^{d-2}
		\\
		& \gg M \sum_{M^{1/d} \le s_d \le 2M^{\tau/d}}  
		s_d^{-1} \brac{\log  2Ms_d^{-1}}^{d-2}
		\\
		&\ge 
		M \sum_{M^{1/d} \le s_d \le 2M^{\tau/d}}  
		s_d^{-1} \brac{\log  2M^{1 - \tau/d}}^{d-2}
		\\
		&\gg M \brac{\log  M}^{d-2}\sum_{M^{1/d} \le s_d \le 2M^{\tau/d}} s_d^{-1}
		\,  \gg \, M \brac{\log  M}^{d-1}.
	\end{aligned}
\end{equation}
The asymptotic equivalence \eqref{|Gamma|} is proven.

For a given $n \in \NN$, let $\brab{\bxi_1,...,\bxi_n} \subset \RRd$ be arbitrary  $n$ points. Denote by $M_n$  the smallest number such that $|\Gamma_d(M_n)| \ge n + 1$. We define the $d$-parallelepiped $K_\bs$ for $\bs \in \NNd_0$ of size 
$$
\delta:= M_n^{\frac{1/\lambda - 1}{d}}
$$
 by
	\begin{equation*}\label{K_bs}
K_\bs:= \prod_{i=1}^d K_{s_i}, \ \ K_{s_i}:= (\delta s_i, \delta s_{i-1}).
\end{equation*}
 Since  $|\Gamma_d(M_n)| > n$, there exists a multi-index 
$\bs \in \Gamma_d(M_n)$ such that $K_\bs$ does not contain any point from $\brab{\bxi_1,...,\bxi_n}$.

As in the proof of Theorem \ref{thm:Q_n-d=1}, we take a nonnegative function $\varphi \in C^\infty_0([0,1])$, $\varphi \not= 0$, and put
\begin{equation}\label{b_s-d}
	b_0:= \int_0^1 \varphi(y) \rd y > 0, \quad b_s :=  \int_0^1 |\varphi^{(s)}(y)| \rd y, \ s = 1,...,r.
\end{equation}		
For $i= 1,...,d$, we define the univariate functions $g_i$ in variable $x_i$ by
\begin{equation}\label{g_i}
	g_i(x_i):= 
	\begin{cases}
		\varphi(\delta^{-1}(x_i - \delta s_{i-1})), & \ \ x_i  \in K_{s_i}, \\
		0, & \ \ \text{otherwise}.	
	\end{cases}	
\end{equation}
Then the mulitivariate functions $g$ and $h$ on $\RRd$ are defined by
	\begin{equation*}\label{g(bx)}
	g(\bx):= \prod_{i=1}^d g_i(x_i),
\end{equation*}
and  
\begin{equation}\label{h(bx)}
	h(\bx):= (gw^{-1})(\bx)= \prod_{i=1}^d g_i(x_i)w^{-1}(x_i)=:
	\prod_{i=1}^d h_i(x_i).		
\end{equation}
Let us estimate the norm $\norm{h}{W^r_{1,w}(\RRd)}$. For every $\bk \in \NNd_0$ with 
$0 \le |\bk|_\infty \le r$,  we prove the inequality
\begin{equation}\label{int-D^br}
	\int_{\RRd}\big|(D^{\bk} h) w\big|(\bx) \rd \bx 
	\le  C   M_n^{(1 - 1/\lambda)(r - 1)}.
\end{equation}
We have
\begin{equation}\label{D^bk h}
	D^\bk h  = \prod_{i=1}^d h_i^{(k_i)}. 		
\end{equation}
Similarly to \eqref{h^(s)}--\eqref{h^(s)w(x)} we derive that for every $i = 1,...,d$,
\begin{equation*}\label{h^(s-i)w(x_i)}
	h_i^{(k_i)}(x_i) w(x_i)= \sum_{\nu_i=0}^{k_i} \binom{k_i}{\nu_i} g_i^{(k_i- \nu_i)}(x_i) (\sign (x_i))^{\nu_i}\sum_{\eta_i=1}^{\nu_i} c_{\nu_i,\eta_i}(\lambda,a) |x_i|^{\lambda_{\nu_i,\eta_i}}, 		
\end{equation*}
where
\begin{equation*}\label{lambda_{nu,nu}}
	\lambda_{\nu_i,\nu_i} = \nu_i(\lambda - 1) > \lambda_{\nu_i,\nu_i-1} > \cdots > \lambda_{\nu_i,1} = \lambda - \nu_i,
\end{equation*}
and  $c_{\nu_i,\eta_i}(\lambda,a)$ are polynomials in the variables $\lambda$ and $a$ of degree at most $\nu_i$ with respect to each variable.
This together with \eqref{b_s-d}--\eqref{g_i} and the inequalities $s_i \ge M_n^{\frac{1}{d}}$ and $\lambda_{\nu_i,\nu_i} = \nu_i(\lambda - 1) \ge  0$  yields that 
\begin{equation}\label{int-h^(s)w-d>1}
	\begin{aligned}
			\int_{\RR}\big|h_i^{(k_i)}(x_i) w(x_i)\big|\rd x_i
	&\le  C \max_{0\le \nu_i \le k_i} \ \max_{1 \le \eta_i \le \nu_i}  
	\int_{K_{s_i}}|x_i|^{\lambda_{\nu_i,\eta_i}}\big|g^{(k_i-\nu_i)}(x_i)\big| \rd x_i 
	\\
	&\le  C \max_{0\le \nu_i \le k_i} (\delta s_i)^{\lambda_{\nu_i,\nu_i}}
	\int_{K_{s_i}}\big|g^{(k_i-\nu_i)}(x_i)\big| \rd x_i 
	\\
	&\le C \max_{0\le \nu_i \le k_i} (\delta s_i)^{\nu_i(\lambda - 1)}
	\delta^{- k_i + \nu_i +1} b_{k_i - \nu_i}
	\\
	&= C \delta^{- k_i +1} \max_{0\le \nu_i \le k_i} \brac{\delta^\lambda  s_i^{\lambda -1}}^{\nu_i}. 
\end{aligned}
\end{equation}
Since $s_i \ge M_n^{\frac{1}{d}}$ and $\delta:= M_n^{\frac{1/\lambda - 1}{d}}$, we have that
$\delta^\lambda  s_i^{\lambda -1} \ge 1$, and consequently, 
$$
\max_{0\le \nu_i \le k_i} \brac{\delta^\lambda  s_i^{\lambda -1}}^{\nu_i}
 = 
 \brac{\delta^\lambda  s_i^{\lambda -1}}^{k_i}.
$$
This equality,  the estimates \eqref{int-h^(s)w-d>1} and the inequalities  $0 \le k_i \le r$ and 
$\delta s_i \ge 1$ yield that
\begin{equation}\nonumber
	\begin{aligned}
		\int_{\RR}\big|h_i^{(k_i)}(x_i) w(x_i)\big|\rd x_i
		&\le  C \delta^{- k_i +1} \brac{\delta^\lambda  s_i^{\lambda -1}}^{k_i} 
		= C \delta \brac{\delta s_i}^{k_i(\lambda -1)}
		\\ 
		& \le  C \delta \brac{\delta s_i}^{r(\lambda -1)} 
		=  C \delta ^{r(\lambda - 1) + 1} s_i^{r(\lambda - 1)} 
	\end{aligned}
\end{equation}
Hence, by \eqref{D^bk h} we deduce 
\begin{equation}\nonumber
	\begin{aligned}
		\int_{\RRd}|(D^\bk h) w|(\bx) \rd \bx 
		&= \prod_{i=1}^d  \int_{\RR}\big|h^{(k_i)}(x_i) w(x_i)\big|\rd x_i
		\\
		&  \le  C \prod_{i=1}^d  \delta ^{r(\lambda - 1) + 1}  s_i^{r(\lambda - 1)} 
		 \le  C \delta ^{d(r(\lambda - 1) + 1)} \brac{\prod_{i=1}^d  s_i}^{r(\lambda - 1)}.
\end{aligned}
\end{equation}
Since  $\prod_{i=1}^d  s_i \le 2M_n$, $\delta:= M_n^{\frac{1/\lambda - 1}{d}}$ and $\lambda  > 1$, we can continue the estimation as
\begin{equation*}\label{int-D^br2}
	\begin{aligned}
		\int_{\RRd}|(D^\bk h) w|(\bx) \rd \bx 
		\le  C  M_n^{(r(\lambda - 1) + 1)(1/\lambda - 1)}   M_n^{r(\lambda - 1)} 
			= C   M_n^{(1 - 1/\lambda)(r - 1)},
	\end{aligned}
\end{equation*}
which completes the proof of the inequality \eqref{int-D^br}.
This  inequality means that $h \in W^r_{1,w}(\RRd)$ and  
$$
\norm{h}{W^r_{1,w}(\RRd)} \le  C   M_n^{(1 - 1/\lambda)(r - 1)}.
$$
 If  we define 
\begin{equation*}\label{h-bar}
	\bar{h}:=  C^{-1} M_n^{-(1-1/\lambda)(r-1)}h,
\end{equation*}
then $\bar{h}$ is nonnegative, $\bar{h} \in \bW^r_{1,w}(\RR)$, $\sup \bar{h} \subset K_\bs$ and by \eqref{b_s-d}--\eqref{h(bx)},
\begin{equation}\nonumber
	\begin{aligned}	
		\int_{\RRd}(\bar{h}w)(\bx) \rd \bx 
		&=  C^{-1} M_n^{-(1-1/\lambda)(r-1)}\int_{\RRd}(h w)(\bx) \rd \bx = \prod_{i=1}^d \int_{K_{s_i}} g_i(x_i) \rd x_i
		\\
		&
		=  C^{-1} M_n^{-(1-1/\lambda)(r-1)} \brac{b_0 \delta}^d =  C' M_n^{- r_\lambda}.
	\end{aligned}
\end{equation}
From the definition of $M_n$ and \eqref{|Gamma|} it follows that
$$M_n(\log M_n)^{d-1} \asymp |\Gamma(M_n)| \asymp n,$$ 
 which implies that $M_n^{-1} \asymp n^{-1} (\log n)^{d-1}$. This allows to receive the estimate
\begin{equation}\label{int-h-bar2}
		\int_{\RRd}(\bar{h}w)(\bx) \rd \bx 
		=  C' M_n^{- r_\lambda}
		\gg
		n^{-r_\lambda} (\log n)^{r_\lambda(d-1)}.
\end{equation}
 Since the interval $K_\bs$ does not contain any point from the set $\brab{\bxi_1,...,\bxi_n}$ which has been arbitrarily choosen, we have 
 $$\bar{h}(\bxi_k) = 0, \ \ k = 1,...,n.
 $$
  Hence, by  Lemma \ref{lemma:Int_n>} and \eqref{int-h-bar2} we have that
\begin{equation*}\label{int-h-bar3}
	\Int_n(\bW^r_{1,w}(\RRd)) 
	\ge
	 \int_{\RRd}\bar{h}(\bx) w(\bx) \rd \bx
	\gg  n^{-r_\lambda} (\log n)^{r_\lambda(d-1)}.
\end{equation*}
The lower bound in \eqref{Int_n(W)} is proven.
	\hfill
\end{proof}

\begin{remark}
	{\rm	
		Let us analyse some properties of the quadratures $Q_\xi$ and their integration nodes $H(\xi)$ which give the upper bound in \eqref{Int_n(W)}. 
		\\
		\\
		1. The set of integration nodes $H(\xi)$ in the quadratures $Q_\xi$ which are formed from the non-equidistant zeros of the orthonornal polynomials $p_m(w)$,
		is a  step hyperbolic cross   \emph{on the function domain} $\RRd$. This is a contrast to the  classical theory of  approximation of multivariate periodic functions having mixed smoothness for which the classical step hyperbolic crosses of integer points are \emph{on the frequency domain} $\ZZd$ (see, e.g., \cite[Section 2.3]{DTU18B} for detail). The terminology 'step hyperbolic cross' of intergration nodes is borrowed from there. In Figure \ref{Fig-1}, in particular, the step hyperbolic cross in the right picture is designed for the Hermite weight $w(\bx)= \exp(- x_1^2 - x_2^2)$ ($d=2$). The set  $H(\xi)$ also completely differs from the classical Smolyak grids of fractional dyadic points \emph{on the function domain} $[-1,1]^d$ (see Figure \ref{Fig-2} for $d=2$) which are used in sparse-grid sampling recovery and numerical integration for functions having a mixed smoothness (see, e.g., \cite[Section~5.3]{DTU18B} for detail). 
		 \\ 
		 \\
		2. The set  $H(\xi)$ is  very sparsely distributed inside the $d$-cube 
		$$K(\xi): = \brab{\bx \in \RRd: \, |x_i| \le C2^{\xi/\lambda}, \ i =1,...,d},$$
		for some constant $C > 0$.
		 Its diameter  which is the length of its symmetry axes is  $2C2^{\xi/\lambda}$, i.e.,  the size of $K(\xi)$. 
		The number of integration nodes in $H(\xi)$ is  $|G(\xi)|\asymp  2^\xi \xi^{d - 1}$. For the integration nodes $H(\xi)=\brab{\bx_{\bk,e,\bs}}_{(\bk,e,\bs) \in G(\xi)}$, we have that
	\begin{equation}\nonumber
		\min_{\substack{(\bk,e,\bs), (\bk',e',\bs') \in G(\xi) \\ (\bk,e,\bs) \not= (\bk',e',\bs')} } 
		\ \min_{1 \le i \le d} \left|\brac{x_{\bk,e,\bs}}_i - \brac{x_{\bk',e',\bs'}}_i \right|  \ \asymp \  2^{- (1 - 1/\lambda)\xi} \ \to 0, \ \text{when} \ \xi \to \infty.
\end{equation}	
On the other hand, the diameter of $H(\xi) $ is going to $\infty$ when $\xi \to \infty$.
}
\end{remark}

\begin{figure}
	\centering{
	\begin{tabular}{cc}
		\includegraphics[height=7.0cm]{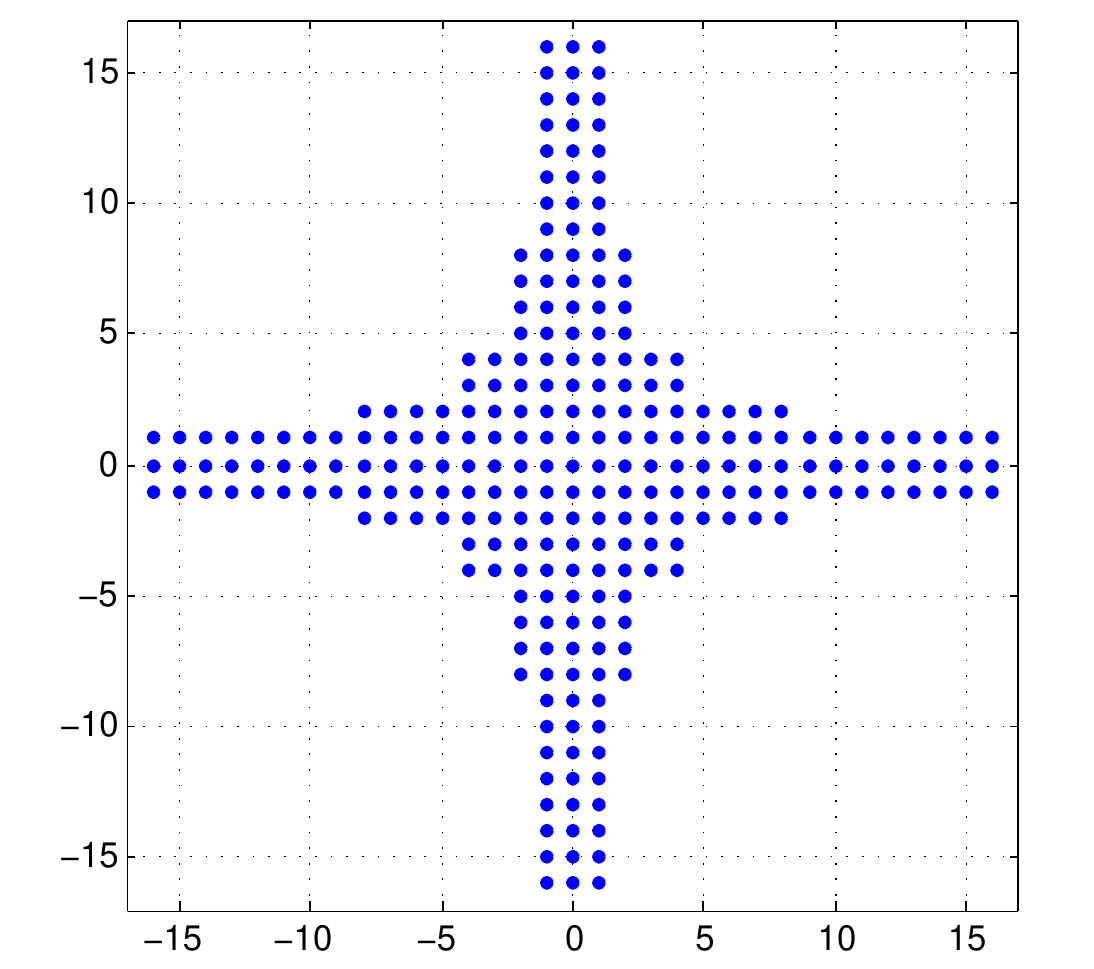}	 &  
		\includegraphics[height=7.0cm]{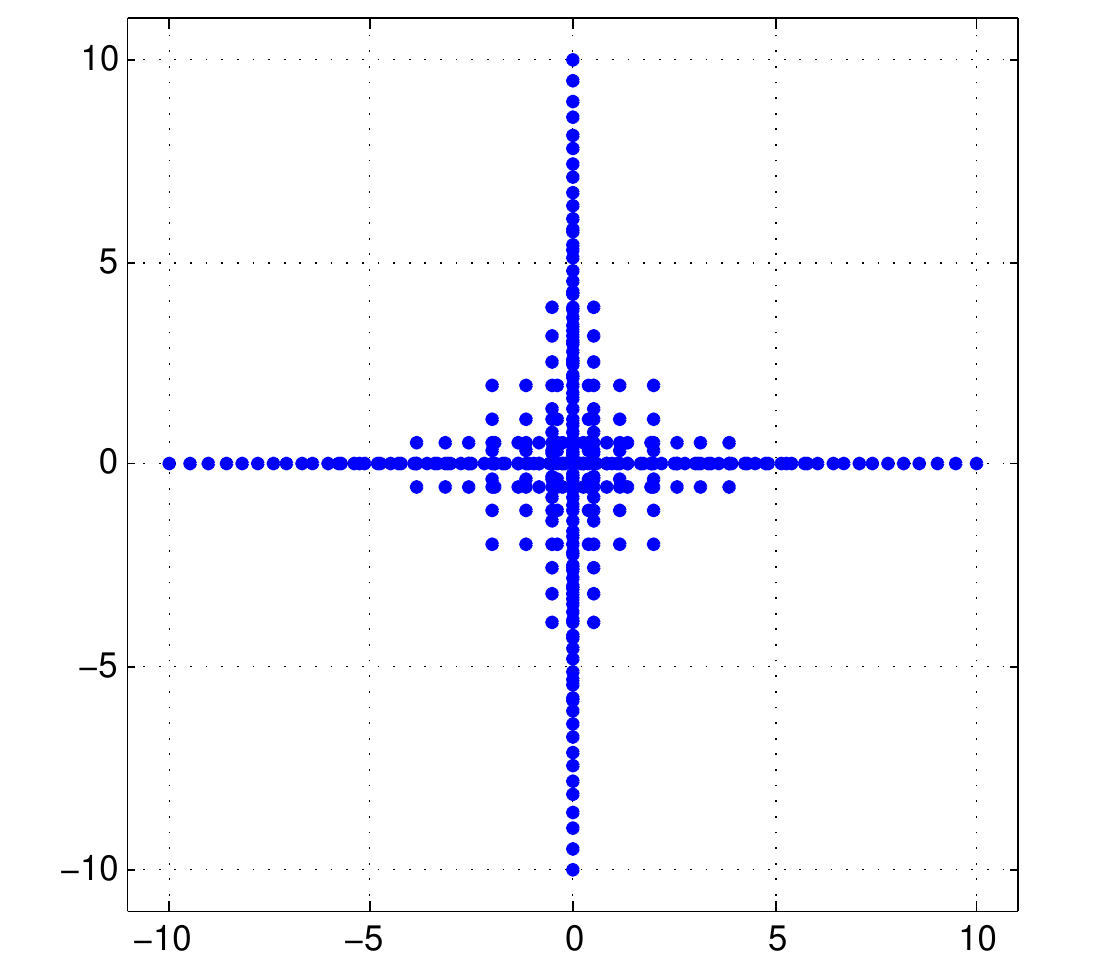}
		\\
		{A classical step hyperbolic cross} & { A Hermite step hyperbolic cross}
	\end{tabular}
}
	\caption{Step hyperbolic crosses ($d=2$)}
	\label{Fig-1}
\end{figure}

\begin{figure}
	\centering{
	\begin{tabular}{cc}
		\includegraphics[height=7.0cm]{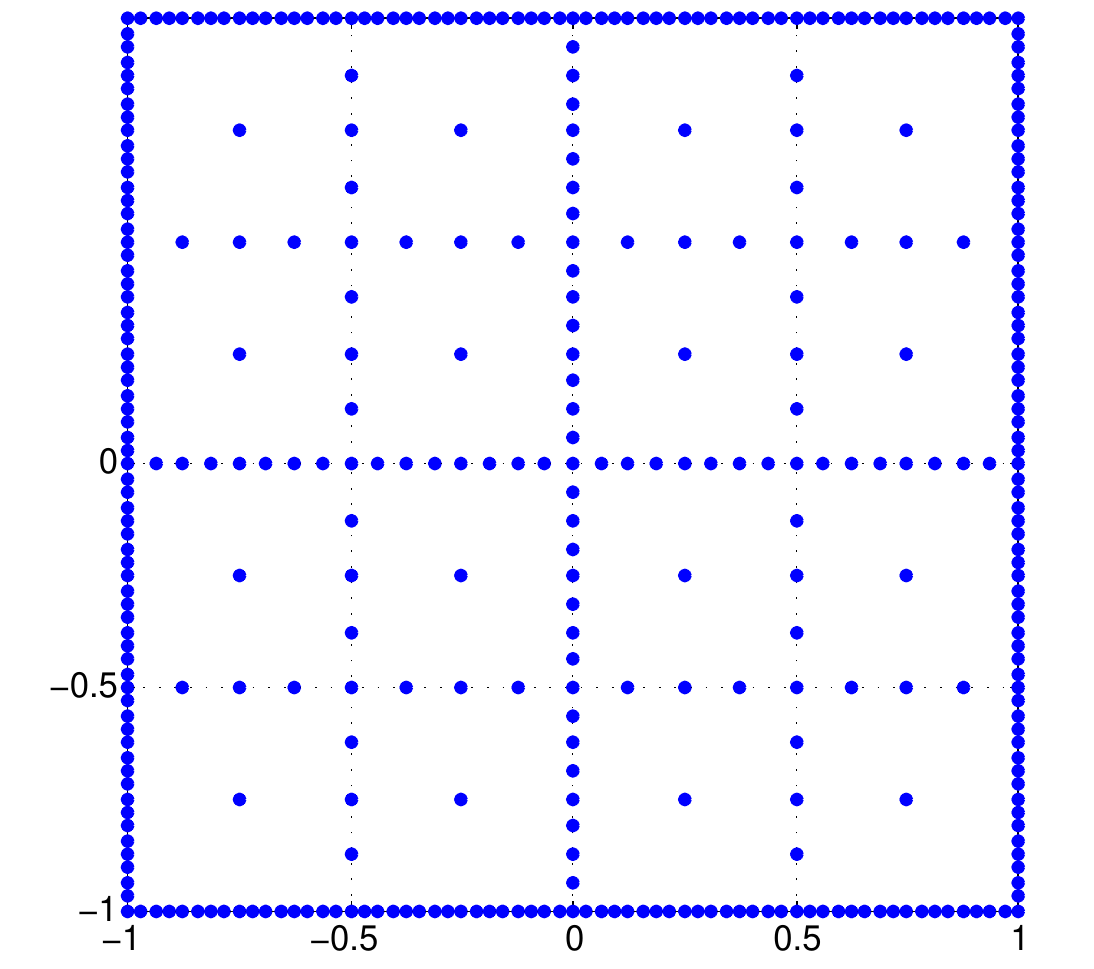}	
	\end{tabular}
}
	\caption{A Smolyak grid ($d=2$)}
	\label{Fig-2}
\end{figure}

\section{Extension to Markov-Sonin weights}
\label{Extension}

In this section, we extend the results of the previous sections to  Markov-Sonin weights. A univariate Markov-Sonin weight is a function of the form 
\begin{equation} \nonumber 
	w_\beta(x):= 	|x|^\beta \exp (- a|x|^2 + b), \ \ \beta > 0, \ \ a > 0, \ \ b \in \RR,
\end{equation}
(here $\beta$ is indicated in the notation to distinguish Markov-Sonin weights $w_\beta$ and Freud-type weight $w$).  
A $d$-dimensional Markov-Sonin weight is defined as
\begin{equation} \nonumber
	w_\beta(\bx):= \prod_{i=1}^d w_\beta(x_i).
\end{equation}
Markov-Sonin weights are not of the form \eqref{w(x)} and have a singularity at $0$. 
We will keep all the notations  and definitions in Sections 
\ref{Introduction}--\ref{Multivariate integration}  with replacing $w$ by $w_\beta$, pointing some modifications.

Denote $\mathring{\RR}^d:= \brac{\RR \setminus \brab{0}}^d$ and 
$\mathring{\Omega}:= \Omega \cap \mathring{\RR}^d$. Besides the spaces $L_{w_\beta}^p(\Omega)$ and $W^r_{p,w_\beta}(\Omega)$ we consider also the spaces $L_{w_\beta}^p\big(\mathring{\Omega}\big)$ and $W^r_{p,w_\beta}\big(\mathring{\Omega}\big)$ which are defined in a similar manner. For the space $W^r_{p,w_\beta}\big(\mathring{\Omega}\big)$, we require one of the following restrictions on $r$ and $\beta$ to be satisfied:
\begin{itemize}
	\item [{\rm (i)}]
	 $\beta > r - 1$;
	\item [{\rm (ii)}] 
	$0 <\beta < r - 1$ and $\beta$ is not an integer, for $f \in W^r_{p,w_\beta}\big(\mathring{\Omega}\big)$, the derivative $D^\bk f$ can be extended to a continuous function on $\Omega$ for all $\bk \in \NNd_0$ such that 
	$|\bk|_\infty \le r - 1 -\lceil \beta \rceil$. 
	\end{itemize}

Let $(p_m(w_\beta))_{m \in \NN}$ be the sequence of orthonormal polynomials with respest to the weight $w_\beta$.
Denote again by $x_{m,k}$, $1 \le k \le \lfloor m/2 \rfloor$ the positive zeros of 	$p_m(w_\beta)$, and by $x_{m,-k} = - x_{m,k}$ the negative ones (if $m$ is odd, then $x_{m,0}= 0$ is also a zero of $p_m(w_\beta)$). 
If $m$ is even, we add $x_{m,0} := 0$.
These nodes are located as 
\begin{equation} \nonumber
	-\sqrt{m} + Cm^{-1/6} < x_{m,- \lfloor m/2 \rfloor} < \cdots 
	< x_{m,-1} < x_{m,0} < x_{m,1} < \cdots <  x_{m,\lfloor m/2 \rfloor} \le \sqrt{m} - Cm^{-1/6}, 
\end{equation}
with a positive constant $C$ independent of $m$ (the Mhaskar-Rakhmanov-Saff number is 
$a_m(w_\beta) = \sqrt{m}$).

In the case (i),  the truncated Gaussian quadrature is defined by
\begin{equation} \nonumber 
	Q^{\rm{TG}}_{2 j(m)}f: = \sum_{1 \le |k| \le j(m)} \lambda_{m,k}(w_\beta) f(x_{m,k}), 
\end{equation} 
and in  the case (ii) by
\begin{equation} \nonumber 
	Q^{\rm{TG}}_{2 j(m)}f: = \sum_{0 \le |k| \le j(m)} \lambda_{m,k}(w_\beta) f(x_{m,k}),
\end{equation} 
where $\lambda_{m,k}(w_\beta)$ are the corresponding Cotes numbers. 

In the same ways, by using related results in \cite{MO2004} we can prove the following counterparts of Theorems \ref{thm:Q_n-d=1} and \ref{theorem:Int_n(W)}  for the unit ball 
$\bW^r_{1,w_\beta}\big(\mathring{\RR}^d\big)$ of the Markov-Sonin weighted Sobolev space $W^r_{1,w_\beta}\big(\mathring{\RR}^d\big)$ of mixed smoothness $r \in \NN$. 
\begin{theorem} \label{thm:Q_n-d=1,SMW}
	For any $n \in \NN$, let $m_n$ be the largest integer such that $2 j(m_n) \le n$. Then the quadratures	$Q^{\rm{TG}}_{2 j(m_n)} \in \Qq_n$, $n \in \NN,$ are  asymptotically optimal  for $\bW^r_{1,w_\beta}\big(\mathring{\RR}\big)$ and
	\begin{equation*}
		\sup_{f\in \bW^r_{1,w_\beta}\big(\mathring{\RR}\big)}
			\bigg|\int_{\RR} f(x) w(x) \rd x - Q^{\rm{TG}}_{2 j(m_n)}f\bigg| 
		\asymp
		\Int_n\big(\bW^r_{1,w_\beta}\big(\mathring{\RR}\big) \big)
		\asymp 
		n^{- r/2}.
	\end{equation*}
\end{theorem}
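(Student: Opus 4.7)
The plan is to parallel the two-part argument of Theorem~\ref{thm:Q_n-d=1}. Since the Gaussian factor in $w_\beta$ has effective exponent $\lambda = 2$, the target rate is $n^{-r/2}=n^{-r_\lambda}$, and the only genuinely new issue is the singularity $|x|^\beta$ at the origin, which is precisely why the statement is formulated on $\mathring{\RR}$ under the regularity restrictions (i) or (ii).

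For the upper bound I would invoke a Markov--Sonin analogue of the truncated-Gaussian error estimate~\eqref{DellaVecchia2003}, derivable from the tools of \cite{MO2004}: for every $f \in W^r_{1,w_\beta}(\mathring{\RR})$,
\begin{equation*}
	\bigg| \int_{\RR} f(x) w_\beta(x) \rd x - Q^{\rm{TG}}_{2 j(m)} f \bigg|
	\le C \bigl( m^{-r/2} \norm{f^{(r)}}{L^1_{w_\beta}(\RR)} + e^{-Km} \norm{f}{L^1_{w_\beta}(\RR)} \bigr).
\end{equation*}
The Mhaskar--Rakhmanov--Saff number is now $a_m(w_\beta) = \sqrt{m}$, so the analogue of~\eqref{x_k+1 - x_k} gives $d_{m,k}\asymp m^{-1/2}$ and $j(m)\asymp m$. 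Taking $m_n$ maximal with $2 j(m_n) \le n$ yields $2 j(m_n) \asymp n$, and the displayed estimate delivers the upper bound $n^{-r/2}$.

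For the lower bound I would apply Lemma~\ref{lemma:Int_n>}. Given arbitrary $\brab{\xi_1,\dots,\xi_n} \subset \RR$, set $\delta = n^{-1/2}$ and $t_j = \delta j$, and among the $n+2$ candidate intervals $(t_{i-1},t_i)$ with $n+1 \le i \le 2n+2$ pick one, call it $I^* = (t_{i-1},t_i)$, containing none of the $\xi_k$; note $I^* \subset \{|x| \asymp \sqrt{n}\}$, hence $I^* \subset \mathring{\RR}$. Imitating \eqref{g}--\eqref{h}, take a scaled bump $g \in C^\infty_0(I^*)$ and set $h := g w_\beta^{-1}$; since $\supp g$ stays away from $0$, $h$ is smooth and belongs to $W^r_{1,w_\beta}(\mathring{\RR})$ regardless of which case (i)/(ii) holds. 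Differentiating $w_\beta^{-1}(x) = |x|^{-\beta} e^{a x^2 - b}$ yields an expansion of $(w_\beta^{-1})^{(s)}(x)\, w_\beta(x)$ as a sum of terms $c_{s,j}\, \sign(x)^s\, x^{\lambda_{s,j}} |x|^{-\mu_{s,j}}$, entirely parallel to \eqref{w^(s)}--\eqref{h^(s)w(x)} but with extra negative-power factors $|x|^{-\mu_{s,j}}$ coming from the $|x|^{-\beta}$ term. On $I^*$, where $|x| \asymp \sqrt{n}$, every polynomial factor is bounded by $n^{s/2}$ and every negative-power factor by an absolute constant, so the calculation in the proof of Theorem~\ref{thm:Q_n-d=1} with $\lambda = 2$ carries over and delivers $\norm{h}{W^r_{1,w_\beta}(\RR)} \le C n^{(r-1)/2}$. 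The normalized fooling function $\bar h := C^{-1} n^{-(r-1)/2} h \in \bW^r_{1,w_\beta}(\mathring{\RR})$ then vanishes at all $\xi_k$ and satisfies $\int_{\RR} \bar h\, w_\beta \rd x \gg n^{-r/2}$, so Lemma~\ref{lemma:Int_n>} yields the matching lower bound.

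The main subtlety is the bookkeeping of the extra negative $|x|$-powers generated by differentiating $|x|^{-\beta}$; these cause no harm because $\supp g$ is located where $|x|$ is large of order $\sqrt{n}$, and in fact they only improve the estimates. The hypotheses (i)/(ii) play essentially no role in the lower-bound construction; they are needed only to ensure that the left-hand side of the claimed optimality identity is well-defined on all of $W^r_{1,w_\beta}(\mathring{\RR})$ and that the truncated Gaussian quadrature (with or without the node $x_{m,0}=0$) is a bona fide element of $\Qq_n$.
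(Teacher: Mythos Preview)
Your proposal is correct and follows essentially the same approach as the paper: the paper gives no detailed proof for this theorem, stating only that it is proven ``in the same ways'' as Theorem~\ref{thm:Q_n-d=1} ``by using related results in \cite{MO2004}'', which is precisely the route you outline (invoke the Markov--Sonin analogue of \eqref{DellaVecchia2003} from \cite{MO2004} for the upper bound, and repeat the fooling-function construction with $\lambda=2$ for the lower bound, the extra $|x|^{-\beta}$ factor being harmless on $|x|\asymp\sqrt{n}$).
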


\begin{theorem} \label{theorem:Int_n(W)SMW}
	We have that
	\begin{equation*}\label{Int_n(W)}
		n^{-r/2} (\log n)^{(d-1)r/2} 
		\ll	
		\Int_n\bW^r_{1,w_\beta}\big(\mathring{\RR}^d\big)) 
		\ll 
		n^{-r/2} (\log n)^{(d-1)(r/2 + 1)}.
	\end{equation*}
\end{theorem}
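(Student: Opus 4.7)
The plan is to follow the two-step argument of Theorems \ref{thm:Q_n-d=1} and \ref{theorem:Int_n(W)} with two modifications. First, the Gaussian decay of $w_\beta$ corresponds to the Freud parameter $\lambda=2$, so $r_\lambda=r/2$ throughout. Second, since $w_\beta$ has an algebraic singularity at the origin, I must verify that the relevant quadrature nodes and bump supports stay in $\mathring{\RR}^d$ (or, in case (ii), that the functions extend continuously across the coordinate hyperplanes).

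For the upper bound, I would set $Q_{2^k}:=Q^{\rm TG}_{2j(m_k)}$, with $m_k$ chosen so that $2j(m_k)\asymp 2^k$. By Theorem \ref{thm:Q_n-d=1,SMW} (which rests on the one-dimensional truncated-Gauss estimate in \cite{MO2004} playing the role of \eqref{DellaVecchia2003}), this sequence satisfies the hypothesis \eqref{IntErrorR} with $a=r/2$ for $f\in W^r_{1,w_\beta}(\mathring{\RR})$. I would then record the Markov-Sonin counterparts of Lemmas \ref{lemma:continuity}--\ref{lemma:Q_xi-error}; the proofs carry over once an embedding of $W^r_{1,w_\beta}(\mathring{\RR}^d)$ into a space of continuous functions on $\mathring{\RR}^d$ (respectively $\RR^d$ in case (ii)) is established via the same cut-off argument as in Lemma \ref{lemma:continuity} combined with the local integrability of $|x|^{-\beta}$ away from $0$. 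The Smolyak-type operator $Q_{\xi_n}$ with $\xi_n$ the largest integer such that $|G(\xi_n)|\le n$ then satisfies $2^{\xi_n}\xi_n^{d-1}\asymp n$, so the analog of Lemma \ref{lemma:Q_xi-error} gives the bound $\ll 2^{-\xi_n r/2}\xi_n^{d-1}\asymp n^{-r/2}(\log n)^{(r/2+1)(d-1)}$.

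For the lower bound, given arbitrary $n$ points $\{\bxi_1,\ldots,\bxi_n\}\subset\RR^d$, I would reuse the combinatorics of $\Gamma_d(M_n)$ from Theorem \ref{theorem:Int_n(W)} with $M_n$ the smallest integer such that $|\Gamma_d(M_n)|\ge n+1$, and step size $\delta:=M_n^{-1/(2d)}$ (corresponding to $\lambda=2$). There exists $\bs\in\Gamma_d(M_n)$ whose parallelepiped $K_\bs$ contains none of the $\bxi_k$; crucially, since $s_i\ge M_n^{1/d}$ we have $\delta s_i\ge M_n^{1/(2d)}\ge 1$, so $K_\bs\subset\mathring{\RR}^d$ sits at a bounded distance from the coordinate hyperplanes where $w_\beta$ is singular. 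I would then define the tensor bump $h(\bx):=\prod_i g_i(x_i)\,w_\beta(x_i)^{-1}$ as in \eqref{g_i}--\eqref{h(bx)} and renormalize to produce $\bar h\in\bW^r_{1,w_\beta}(\mathring{\RR}^d)$ supported in $K_\bs$, vanishing at each $\bxi_k$.

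The main obstacle will be verifying $\|h\|_{W^r_{1,w_\beta}(\mathring{\RR}^d)}\ll M_n^{(r-1)/2}$, since $w_\beta^{-1}(x)=|x|^{-\beta}e^{ax^2-b}$ now carries an algebraic singularity on top of Gaussian growth. A Leibniz expansion yields $(w_\beta^{-1})^{(\nu)}(x)\,w_\beta(x)=\sum_{\mu=0}^\nu c_{\nu,\mu}\,\sign(x)^\mu\,|x|^{-\mu}\,P_{\nu-\mu}(x)$, where the factor $|x|^\beta$ in $w_\beta$ cancels the $|x|^{-\beta}$ in $w_\beta^{-1}$ and $P_{\nu-\mu}$ is a polynomial of degree $\nu-\mu$ coming from derivatives of $e^{ax^2}$. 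On $K_{s_i}$ with $|x_i|\asymp\delta s_i\ge 1$, the dominant contribution is $|x_i|^\nu$, matching exactly the Freud bound $|x|^{\nu(\lambda-1)}$ at $\lambda=2$ used in \eqref{int-h^(s)w-d>1}. The factorized estimate $\int|D^\bk h|\,w_\beta\ll \delta^d\prod_i(\delta s_i)^{k_i}\ll\delta^{d(r+1)}\bigl(\prod_i s_i\bigr)^r\ll M_n^{(r-1)/2}$ then proceeds identically, and the final computation $\int\bar h\,w_\beta\,\rd\bx\asymp M_n^{-r/2}$ combined with $M_n^{-1}\asymp n^{-1}(\log n)^{d-1}$ and Lemma \ref{lemma:Int_n>} produces the lower bound $\gg n^{-r/2}(\log n)^{(d-1)r/2}$.
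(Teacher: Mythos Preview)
Your proposal is correct and follows essentially the same approach the paper indicates: the paper gives no detailed proof of Theorem \ref{theorem:Int_n(W)SMW}, stating only that ``in the same ways, by using related results in \cite{MO2004}'' one obtains the Markov--Sonin counterparts of Theorems \ref{thm:Q_n-d=1} and \ref{theorem:Int_n(W)}. Your outline supplies precisely those details---specializing to $\lambda=2$, invoking the one-dimensional truncated-Gauss estimate from \cite{MO2004} in place of \eqref{DellaVecchia2003}, and checking that the fooling bumps $K_\bs$ stay away from the coordinate hyperplanes so that the algebraic factor $|x|^{-\beta}$ in $w_\beta^{-1}$ causes no trouble---and is faithful to the paper's intended argument.
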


\medskip
\noindent
{\bf Acknowledgments:}  
This research is funded by Vietnam Ministry of Education and Training under Grant No. B2023-CTT-08. A part of this work was done when  the author was working at the Vietnam Institute for Advanced Study in Mathematics (VIASM). He would like to thank  the VIASM  for providing a fruitful research environment and working condition. The author specially thanks Dr. Nguyen Van Kien for drawing Figures~\ref{Fig-1}~and~\ref{Fig-2}.
	%%%%%%%%%%%%%%%%%%%%%%%%%%%%%%%%%%
	%%%%%%%%%%%%%%%%%%%%%%%%%%%%%%%%%%
\bibliographystyle{abbrv}
\bibliography{AllBib}
\end{document}